\def\XXint#1#2#3{{\setbox0=\hbox{$#1{#2#3}{\int}$}
\vcenter{\hbox{$#2#3$}}\kern-.5\wd0}}
\theoremstyle{definition}
\newtheorem{theorem}{Theorem}[section]
\newtheorem{lemma}[theorem]{Lemma}
\newtheorem{proposition}[theorem]{Proposition}
\newtheorem{definition}[theorem]{Definition}
\newtheorem{remark}[theorem]{Remark}
\numberwithin{equation}{section}
\newcommand{\R}{\mathbb{R}}
\newcommand{\C}{\mathbb{C}}
\renewcommand{\H}{\mathbb{H}}
\renewcommand{\O}{\mathbb{O}}
\renewcommand{\S}{\mathbb{S}}
\newcommand{\M}{\mathcal{M}}
\newcommand{\eps}{\varepsilon}
\newcommand{\inj}{\text{inj}}
\newcommand{\ra}{\rightarrow}
\newcommand{\vol}{\text{vol}}
\renewcommand{\div}{\text{div}}
\newcommand{\tr}{\text{tr}}
\newcommand{\dvol}{\text{dvol}}
\renewcommand{\P}{\mathbb{P}}
\title{Discrete and Continuous Green Energy on Compact Manifolds}
\date{}
\author{Carlos Beltr\'an}
\email[Carlos Beltr\'an]{beltranc@unican.es}
\author{Nuria Corral}
\email[Nuria Corral]{corraln@unican.es}
\author{Juan G. Criado del Rey}
\email[Juan G. Criado del Rey]{gonzalezcj@unican.es}
\address[Postal address of the three authors]{Dpto. de Matem\'aticas, Estad\'istica y Computaci\'on, Facultad de Ciencias, Universidad de Cantabria\\
Avda. Los Castros s/n\\
39005 Santander, Cantabria (SPAIN).}
\thanks{This research was partially supported by Ministerio de Econom\'ia y Competitividad under grants MTM2014-57590-P, MTM2013-46337-C2-1-P and a PhD grant by the University of Cantabria.}
\begin{document}
\begin{abstract}
In this article we study the role of the Green function for the Laplacian in a compact Riemannian manifold as tool for obtaining well--distributed points. In particular, we prove that a sequence of minimizers for the Green energy is asymptotically uniformly distributed. We pay special attention to the case of locally harmonic manifolds.
\end{abstract}

\maketitle

\section{Introduction}
Distributing points in spheres or other sets is a very classical problem. Its modern formulation in terms of energy--minimizing configurations is due to the discoverer of the electron J. J. Thomson who in 1904 posed the question (rephrased here) {\em in which position --within some set such as a ball or a sphere--  would $N$ electrons lie in order to minimize their electrostatic potential?} (see \cite{thomson}). The actual origin of the problem is certainly much older (see Section \ref{sec:mit}).

Thomson's question was related to a certain atomic model -- the plumb pudding model --  which had a very short life due to the spectacular advances of experimental physics in the beginning of the XX century. The question still remained as a beautiful problem to be solved, and gained importance for different applications in the subsequent years. In 1930, the botanist Tammes suggested that the (astonishingly regular) distribution of pores in pollen particles followed a pattern that maximized the minimal distance between pores (see \cite{tammes1930} for Tammes' original publication and \cite{pollen} for high definition images). This idea gave an excellent explanation to the fact that there are barely pollen particles with $5$ or $11$ pores (since if $5$ pores can be placed in the surface of a sphere then $6$ equal sized pores can also be placed, and similarly for $11$ and $12$. The mathematical proof of this fact was not complete until the 1980's, see \cite{FejesToth,SvdW,Boroczky,Danzer}). See \cite{whyte,leech1957} for two classical reviews on the problem. 

A seminal paper \cite{SaffKuijlaars} launched a new collection of works on the topic of distributing points in spheres. The problem had gain new motivation with Shub and Smale's approach to polynomial system solving, which in the one--dimensional case required to find a polynomial all of whose zeros were well--conditioned in a particular sense. In \cite{bezout_iii} they proved that such zeros correspond (via the stereographic projection) to points in the Riemann sphere which maximize the product of their mutual distances, equivalently, points with minimal logarithmic energy. This relation led Smale to include the problem of algorithmically finding these points in his list of problems for the XXI Century \cite{Smale_problems}. See \cite{stateofart,Nerattinietal} for recent surveys on Smale's problem.

There are many different approaches to the definition of what a {\em sensibly distributed} collection of spherical points is. Apart from the mentioned minimization of the energy and maximization of minimal distances, other definitions include having small discrepancy, providing exact integral formulas for low degree polynomials (spherical $t$--designs, see \cite{Bondarenkoetal,ujue} for a recent breakthrough), having optimal covering radius, maximizing the sum of the mutual distances, etc. There are dozens of papers on each of these problems. A very recent and very complete survey on the problem is \cite{BrauchartGrabner}.

In a recent paper \cite{beltran_facloc}, the problem of minimizing the logarithmic energy in the $2$--sphere was rewritten as a facility location problem: that of allocating a number of heat sources in such a way that the average temperature is maximized. This approach led to some nontrivial results including upper bounds for the logarithmic energy of well--separated sequences with small discrepancy. As a consequence it was proved that a sequence of minimizers of Riesz's $s$--energy is asymptotically minimizing for the logarithmic energy (the reciprocal of this fact was proved in Leopardi's paper \cite{Leopardi2013}).

The logarithmic energy is defined by
\[
E_{\mathrm{log}}(x_1, ..., x_N) = \sum_{i\neq j}\log\|x_i-x_j\|^{-1}, \quad x_i\in \S^2.
\] This function has a very special property: its (spherical) Laplacian is constant. This follows from the fact that the function $\log\|x-y\|^{-1}$ is (up to scaling) the Green function for the Laplacian in $\S^2$. A collection of points minimizing the logarithmic energy is called a set of {\em elliptic Fekete points}, though sometimes the word ``elliptic'' is omitted. See \cite{Fekete,Tsuji,HardinSaff} for an introduction to the classical theory.

The Riesz $s$--energy is defined by
\[
E_s(x_1, ..., x_N) = \sum_{i\neq j}\|x_i-x_j\|^{-s}.
\] Remarkable progress in the study of logarithmic and Riesz energies (minimum values, properties of the minimal energy configurations, relation to separation distance, spherical cap discrepancy and cubature formulas...) has taken place in the last three decades, see for example (additionally to the already mentioned references) \cite{CohnKumar} for universally optimal configurations, \cite{saff94,saff98,SandierSerfaty,BeterminSandier,BHS2012,BMO} for asymptotic bounds on the energy, \cite{BorodachovHardinSaff} for complexity considerations on the computation of the energy, \cite{ABD,WomersleySloan,Brauchar2011,Hecke1,Hecke2,ABD,beck1,beck2,GT} for relation to discrepancy, interpolation and quadrature, \cite{dragnev,Leopardi2013} for relation to separation distances.

We now want to define \emph{minimal energy points} in an arbitrary compact manifold. Interesting cases include orthogonal groups (as in \cite{smith}), but we are looking for a standard approach for the general case. General manifolds lack a standard embedding into some Euclidean space, so we cannot directly use the energies defined above in those cases. Still, we could take the definition of the logarithmic energy or the Riesz s--energy and just change Euclidean distance by Riemannian distance. This is something feasible, but if we do so, the resulting function will not be everywhere smooth, since the Riemannian distance function is not everywhere smooth in compact manifolds. A more natural approach would be to take an intrinsic smooth kernel $K(x,y)$ and use it to define an intrinsic discrete energy with the formula $\sum_{i\neq j}K(x_i,x_j)$. In this article we will study the role of the Green function $G$ of a compact manifold as a measure of the well--distribution of a set of points. The \emph{(discrete) Green energy} of a set of $N > 1$ distinct points $x_1, ..., x_N \in \M$ is given by
\[
E_G(x_1, ..., x_N) = \sum_{i\neq j}G(x_i,x_j).
\] This definition leads in fact to well--distributed points in the following sense:

\begin{theorem}[Main]\label{th:main} Let $\M$ be a compact Riemannian manifold of dimension $n > 1$ and let $G$ be its Green function. The unique probability measure minimizing the continuous $G$--energy
\[
\mathcal{I}_G[\mu] = \int_{x,y\in\M}G(x,y)d\mu(x)d\mu(x)
\] is the uniform measure $\lambda$ on $\M$. Moreover, for each $N > 1$, let $\omega^*_N = \{x_1, ..., x_N\}$ be a set of minimizers for the Green energy $E_G$. Then
\[
\frac{1}{N}\sum_{x\in \omega^*_N} \delta_{x} \stackrel{*}{\rightharpoonup} \lambda.
\]
\end{theorem}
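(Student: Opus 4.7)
The plan is to prove the two halves of the statement separately and connect them through a truncation of $G$ near the diagonal.

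\textbf{Continuous part.} On a closed Riemannian manifold the Green function is normalized so that $\int_{\M}G(x,\cdot)\,d\lambda = 0$ for every $x$, and it admits the $L^{2}$ spectral expansion $G(x,y)=\sum_{k\geq 1}\lambda_{k}^{-1}\phi_{k}(x)\phi_{k}(y)$ over the nonzero eigenpairs $(\phi_{k},\lambda_{k})$ of $-\Delta$. Writing any probability measure as $\mu=\lambda+\nu$ with $\nu(\M)=0$, Fubini and the normalization kill the cross term and give $\mathcal{I}_{G}[\lambda]=0$, so
\[
\mathcal{I}_{G}[\mu]=\mathcal{I}_{G}[\nu]=\sum_{k\geq 1}\lambda_{k}^{-1}\Bigl(\int_{\M}\phi_{k}\,d\nu\Bigr)^{2}\geq 0,
\]
with equality iff $\int\phi_{k}\,d\nu=0$ for every $k$, i.e., $\nu=0$. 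This settles the first half of the theorem.

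\textbf{Discrete part.} For the asymptotic distribution of the minimizers I would sandwich $\min E_{G}/N^{2}$. The upper bound comes from i.i.d.\ sampling with law $\lambda$: the integrable singularity of $G$ (of order $d^{2-n}$ if $n>2$ and logarithmic if $n=2$) combined with $\int G(x,\cdot)\,d\lambda=0$ gives $\mathbb{E}[E_{G}]=N(N-1)\mathcal{I}_{G}[\lambda]=0$, hence $\min E_{G}\leq 0$ for every $N$. For the matching lower bound, fix $M>0$, set $G_{M}=\min(G,M)$, and let $\mu_{N}=\tfrac{1}{N}\sum_{x\in\omega_{N}^{*}}\delta_{x}$. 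Using $G_{M}(x,x)\leq M$,
\[
E_{G}(\omega_{N}^{*})\geq\sum_{i\neq j}G_{M}(x_{i},x_{j})=N^{2}\mathcal{I}_{G_{M}}[\mu_{N}]-\sum_{i}G_{M}(x_{i},x_{i})\geq N^{2}\mathcal{I}_{G_{M}}[\mu_{N}]-MN.
\]
Extracting a weak-$*$ convergent subsequence $\mu_{N}\to\mu$ and invoking the weak-$*$ lower semicontinuity of $\mathcal{I}_{G_{M}}$ (which holds because $G_{M}$ is bounded and lsc on the compact set $\M\times\M$), then letting $M\to\infty$ via monotone convergence (since $G$ is bounded below and $G_{M}\uparrow G$), one obtains $\mathcal{I}_{G}[\mu]\leq 0=\mathcal{I}_{G}[\lambda]$. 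By the first part, $\mu=\lambda$. Since every weak-$*$ cluster point of $\{\mu_{N}\}$ equals $\lambda$ and the probability measures on the compact $\M$ form a weak-$*$ compact set, the full sequence converges weakly to $\lambda$.

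\textbf{Main obstacle.} The point to pin down carefully is the diagonal singularity of $G$. The truncation scheme above works provided $G$ is bounded below on $\M\times\M$ (which follows from its known local asymptotics plus smoothness off the diagonal) and $\mathcal{I}_{G_{M}}$ is weak-$*$ lsc on probability measures (standard for bounded lsc kernels on compacta). Once these two ingredients are in place, everything else is bookkeeping with Fubini and monotone convergence.
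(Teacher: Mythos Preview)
Your discrete half is essentially the standard ``transfinite diameter equals Wiener constant'' argument that the paper imports as Theorems~3.1 and~3.2 from its reference, so there is no disagreement there; you just reprove by hand what the paper quotes.

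The gap is in the continuous half, and it is precisely the one the paper singles out. You write
\[
\mathcal{I}_{G}[\nu]=\sum_{k\geq 1}\lambda_{k}^{-1}\Bigl(\int_{\M}\phi_{k}\,d\nu\Bigr)^{2},
\]
but this identity requires interchanging the infinite spectral sum with integration against the signed measure $\nu\otimes\nu$. The expansion $G(x,y)=\sum_{k}\lambda_{k}^{-1}\phi_{k}(x)\phi_{k}(y)$ converges in $L^{2}(\M\times\M)$, not uniformly (it cannot, since $G$ blows up on the diagonal), and for a general signed Borel measure $\nu$ you have no domination of the partial sums by a $|\nu|\otimes|\nu|$-integrable function. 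The paper says explicitly: ``the interchange of the sum and the integral in $(*)$ does not directly follow from the Dominated Convergence Theorem and must be carefully justified. We did not find this task to be easy for a general Riemannian manifold and a general signed measure, so we will then use an alternative argument.'' Their workaround is to mollify $\nu$ to smooth densities $\nu_{\eps}$, prove $\mathcal{I}_{G}[\nu_{\eps}]=\int\|\nabla\tilde\nu_{\eps}\|^{2}\geq 0$ via Green's identity, and then pass to the limit using careful pointwise bounds on the mollified kernel (their Lemmas~3.6--3.10). Your sketch skips exactly this work. If you want to salvage the spectral route, you would need an independent argument (e.g.\ via the heat kernel $e^{-t\Delta}$ and monotone convergence in $t$) to justify the interchange; as written, the step is asserted, not proved.
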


The Green function of a general Riemannian manifold is usually very hard or almost impossible to compute. However, there is a class of manifolds in which the Green function can be computed by solving a simple ODE: the class of locally harmonic manifolds. In Section \ref{section:lh} we work out some examples of Green functions for this class of manifolds.

\subsection{The problem in classical mythology}\label{sec:mit}

 In the Metamorphoses, Ovid tells the story of the nymph Io, seen by Jupiter when she was walking through the forest. Jupiter, fascinated by her beauty, assaulted her and covered the trees with a dense cloud so that his wife Juno wouldn't catch them. The goddess suspected something and run towards her husband, but he quickly converted Io into a beautiful, white cow.

Still suspicious, Juno asked Jupiter to give her the cow as a gift, which he accepted to avoid further questioning. Juno then ordered the giant Argos Panoptes to watch the cow day and night. Argos has got a hundred eyes evenly distributed along his body so that some of the eyes might be closed while still watching on every direction.

Jupiter finally sent Mercury to rescue Io. By telling the giant a long story, presumably boring but actually quite charming, the god made Argos fall asleep in such a way that he closed all his eyes at once. He then slew the giant and released Io. After that she began a long journey around the world until she was forgiven by Juno and returned to her original shape.

\begin{figure}[htp]
	\centering
		\includegraphics[width=0.9\textwidth]{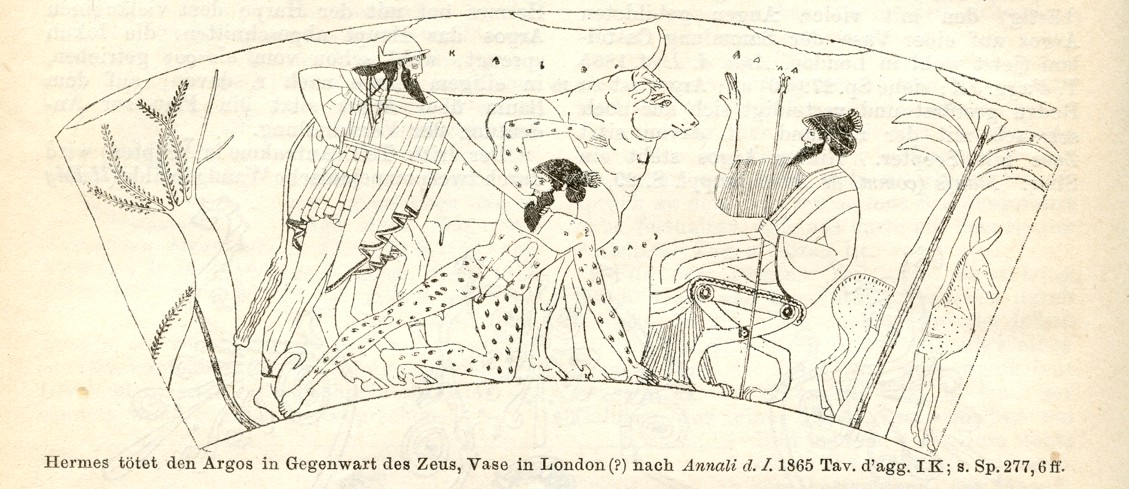}
	\caption{A picture by Wilhelm Heinrich Roscher, dated 1890, of a 5th century BCE Athenian red figure vase depicting Hermes slaying Argos Panoptes. Note the eyes which are evenly distributed on Argo's body. Io as a cow observes the scene in the background. The picture, of public domain, was found in \cite{wiki}.} \label{fig:argos}
\end{figure}

The Greek artists who painted the scene in the 5th century BCE allocated a large number of eyes on the body of Argos; the criteria had probably been a reasonable separation distance and a short covering radius. A vase with such painting can be found on the Kunsthistorisches Museum in Vienna, see Figure \ref{fig:argos}.   

\section{Green Function}

\subsection{Notation and conventions.} Through this article $\M = (\M,g)$ will denote a $C^\infty$ compact Riemannian manifold without boundary of dimension $n > 1$ with volume $V = \vol(\M)$ and volume form $\dvol$. The (Riemannian) distance between any pair of points $x,y\in \M$ will be denoted by $d(x,y)$. For a point $x\in \M$ we will denote by $\inj(x)$ the injectivity radius of $x$. The injectivity radius of $\M$ will be denoted by $\inj(\M)$. We will denote by $B(x,r) = \{y\in \M : d(x,y) < r\}$ the geodesic ball of radius $r$ and by $S(x,r) = \{y\in \M : d(x,y) = r\}$ the geodesic sphere of radius $r$. We will follow the convention that the Riemannian Laplacian is given by $\Delta = -\div \nabla$.

\subsection{The Green function in a compact manifold}
We will start by recalling the existence of the Green function in a compact manifold.

\begin{theorem}\cite[Theorem 4.13]{aubin}\label{thm:green_f} Let $\M^n$ be a compact Riemannian manifold. There exists a smooth function $G$ defined on $\M \times \M$ minus the diagonal with the following properties:
\begin{enumerate}
\item For every function $\varphi\in C^2(\M)$,
\[
\varphi(x) = V^{-1}\int_{y\in \M} \varphi(y)\dvol(y) + \int_{y\in \M}G(x,y)\Delta\varphi(y)\dvol(y).
\] In other words,
\[
\Delta_yG(x,y) = \delta_x(y)-V^{-1}
\] in the sense of distributions.

\item There exists a constant $k$ such that, for every $x\neq y$,
\[
\begin{array}{lr}
|G(x,y)| \leq k(1+|\log r|) & \text{ for } n = 2,\vspace{0.2cm}\\
|G(x,y)| \leq kr^{2-n} & \text{ for } n > 2,\vspace{0.2cm}\\
\|\nabla_yG(x,y)\| \leq kr^{1-n},\vspace{0.2cm}\\
\|\nabla^2_yG(x,y)\| \leq kr^{-n},
\end{array}
\] with $r = d(x,y)$.

\item There exists a constant $A$ such that $G(x,y) \geq A$.

\item The function $x\mapsto \int_{y\in\M}G(x,y)\dvol(y)$ is constant.

\item $G(x,y) = G(y,x)$ for every $x\neq y$.
\end{enumerate}
\end{theorem}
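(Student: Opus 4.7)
The plan is to construct $G$ by the standard elliptic parametrix method: write $G = H + F$ where $H$ is a singular parametrix encoding the Euclidean fundamental solution, and $F$ is a smooth correction obtained by solving a regular equation on $\M$. (An alternative route is the heat-kernel formula $G(x,y) = \int_0^\infty (p_t(x,y) - V^{-1})\,dt$, which is slicker for (1), (4), (5) via the spectral decomposition but less transparent for the sharp pointwise bounds in (2).)

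First, fix $\eps \in (0, \inj(\M)/2)$ and a cutoff $\chi \in C^\infty([0,\infty))$ with $\chi \equiv 1$ on $[0, \eps/2]$ and $\chi \equiv 0$ on $[\eps, \infty)$, and set $H(x,y) := \chi(d(x,y))\,\gamma_n(d(x,y))$ with $\gamma_n(r) = c_n r^{2-n}$ for $n > 2$ and $\gamma_n(r) = -c_n \log r$ for $n = 2$, the constant $c_n$ being normalized so that $\gamma_n(|\cdot|)$ is a fundamental solution of $\Delta$ on $\R^n$. A direct computation in geodesic normal coordinates centered at $x$, using the expansion of $\sqrt{\det g}$ to first nontrivial order, shows
\[
\Delta_y H(x,y) = \delta_x(y) - \psi(x,y),
\]
where $\psi$ is smooth off the diagonal and $|\psi(x,y)| \leq C\, d(x,y)^{2-n}$ (and $C|\log d(x,y)|$ for $n=2$). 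Pairing this identity with the constant function $1$ on $\M$ forces $\int_\M \psi(x,y)\,\dvol(y) = 1$, so the equation $\Delta_y F(x,\cdot) = -V^{-1} + \psi(x,\cdot)$ has zero-mean right-hand side and standard $L^p$ elliptic theory on the closed manifold produces a solution $F(x,\cdot) \in W^{2,p}(\M)$, unique up to an additive constant. Adding finitely many correction terms to $H$ to improve the regularity of $\psi$, combined with elliptic bootstrapping, makes $F$ smooth off the diagonal with smooth joint dependence on $(x,y)$; fixing the additive constant by demanding $\int_\M G(x,y)\,\dvol(y) = 0$ gives (4), and (1) then follows by multiplying the distributional identity $\Delta_y G = \delta_x - V^{-1}$ by $\varphi$ and using self-adjointness of $\Delta$.

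The remaining items are essentially bookkeeping: (2) follows from the explicit singular behavior of $H$ together with Schauder estimates applied to $F$; (3) follows from the positivity of $\gamma_n$ near the diagonal and the continuity of $G$ off the diagonal on a compact set; and (5) follows from the fact that $(x,y) \mapsto G(y,x)$ also solves the defining problem with the same normalization, so must coincide with $G$ by the uniqueness half of (1) (which reduces to the fact that the only smooth solutions of $\Delta u = 0$ on a closed manifold are constants). The hard part will be arranging that $F$ has enough joint regularity in $(x,y)$ to justify the pointwise Hessian bound $\|\nabla_y^2 G(x,y)\| \leq k r^{-n}$: this requires iterating the parametrix far enough that the remainder $\psi$ is $C^\alpha$ in $y$ uniformly in $x$, after which $C^{2,\alpha}$-Schauder estimates in the $y$ variable deliver the stated gradient and Hessian bounds.
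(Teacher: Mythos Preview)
The paper does not prove this theorem at all: it is quoted verbatim from Aubin's book \cite[Theorem 4.13]{aubin} and used as a black box throughout. So there is no ``paper's own proof'' to compare against.

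That said, your parametrix construction is the standard route and is essentially what Aubin does in the cited reference: build $G = H + F$ with $H$ the cut-off Newtonian kernel in normal coordinates, iterate the parametrix until the remainder $\psi$ is regular enough, and solve the zero-mean Poisson equation for $F$. Your handling of (1)--(4) is fine. One small wrinkle in your argument for (5): saying that $(x,y)\mapsto G(y,x)$ ``also solves the defining problem'' is ambiguous, since swapping the arguments gives $\Delta_x G(y,x) = \delta_y - V^{-1}$, not the same equation in the $y$-variable. The clean way to get symmetry is the usual Green's-identity computation
\[
\int_{\M} G(x,z)\,\Delta_z G(y,z)\,\dvol(z) = \int_{\M} \Delta_z G(x,z)\,G(y,z)\,\dvol(z),
\]
which, after inserting $\Delta_z G(\cdot,z) = \delta_{(\cdot)} - V^{-1}$ on both sides and using your normalization $\int G(\cdot,z)\,\dvol(z)=0$, yields $G(x,y)=G(y,x)$; the singularities are mild enough that the formal computation is easily justified by excising small balls and passing to the limit.
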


We call $G$ the \emph{Green function for the Laplacian}. The Green function is uniquely defined by (1) up to an additive constant. Hence, in view of (4) in the previous theorem, we may assume that $\int G = 0$.

\begin{remark} One can think of the Green function as follows: suppose that we place a source of infinite heat at a point $x\in \M$ and that $\M$ is cooling at a constant rate $V^{-1}$. Then $y\mapsto G(x,y)$ is a stationary solution to the heat equation $(\partial_t+\Delta)u = \delta_x-V^{-1}$.
\end{remark}

\begin{remark}\label{rmk:poisson} From (1) in Theorem \ref{thm:green_f}, if $f:\M \ra \R$ is a continuous function with $\int f = 0$, then the function
\[
u(x) = \int_{y\in \M} G(x,y)f(y)\dvol(y)
\] satisfies $\Delta u = f$.
\end{remark}

\subsection{Locally harmonic manifolds}\label{section:lh} As we said before, computing the Green function of a general Riemannian manifold is usually a hard task. However, if we restrict ourselves to the class of locally harmonic manifolds, the computations are much easier. A manifold is locally harmonic at a point $x$ if every sufficiently small geodesic sphere around $x$ has constant mean curvature.

A more useful (yet equivalent) definition for our purposes uses the concept of volume density.

\begin{definition}\label{def:density} Let $\M$ be a Riemannian manifold and let $x\in \M$ be a point. The \emph{volume density} $\omega_x$ is a function whose local expression in normal coordinates around $x$ is
\[
\omega_x(y) = \sqrt{\det{g_{ij}(\exp_x^{-1}(y))}}.
\] (See \cite[6.3]{besse} or \cite{kreyssig} for a coordinate--free definition).
\end{definition}

\begin{proposition}\label{prop:density} The volume density satisfies the following properties:
\begin{enumerate}
\item $\omega_x$ is smooth in any normal neighbourhood around $x$.
\item $\omega_x(x) = 1$.
\item $\omega_x(y) > 0$ if $d(x,y) < \inj(x)$.
\item $\omega_x(y) = 0$ if and only if $y$ is a conjugate point to $x$.
\item $\omega_x(y) = \omega_y(x)$.
\end{enumerate}
\end{proposition}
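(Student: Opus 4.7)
The plan is to relate the volume density $\omega_x$ to the differential of the exponential map and then derive each property from standard facts about $\exp_x$ and Jacobi fields. The pivotal identity is that, for $y$ in a normal neighborhood of $x$ and $v = \exp_x^{-1}(y)$, the Gauss lemma combined with the change-of-variables formula yields
\[
\omega_x(y) = \bigl|\det (d\exp_x)_v\bigr|,
\]
where the determinant is computed with respect to orthonormal bases of $T_x\M$ and $T_y\M$. I would first verify this by writing the pullback $\exp_x^* g$ in normal coordinates and comparing to $\det g_{ij}$ appearing in Definition \ref{def:density}.

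Items (1)--(4) then follow quickly. For (2), $(d\exp_x)_0$ is the identity on $T_x\M$, so $\omega_x(x) = 1$. For (3), inside $B(0,\inj(x))\subset T_x\M$ the map $\exp_x$ is a diffeomorphism, so its differential is nowhere singular and $\omega_x(y) > 0$. For (4), a point $y = \exp_x(v)$ is conjugate to $x$ by definition iff $(d\exp_x)_v$ is singular, i.e.\ iff $\omega_x(y) = 0$. For (1), smoothness of $g_{ij}$ and of $\exp_x^{-1}$ makes the composition $y\mapsto g_{ij}(\exp_x^{-1}(y))$ smooth on any normal neighborhood, and the positivity from (3) permits taking a smooth square root.

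The bulk of the work is symmetry (5). I would prove it using Jacobi fields along the geodesic $\gamma:[0,1]\to\M$ with $\gamma(0)=x$, $\gamma(1)=y$, assuming without loss of generality that $y$ lies in a normal neighborhood of $x$ (otherwise one or both sides vanish by (4) and the symmetric definition of conjugate points). Choose an orthonormal basis $\{e_i\}$ of $T_x\M$ with $e_1 = \gamma'(0)/|\gamma'(0)|$ and parallel-transport to an orthonormal frame $\{e_i(t)\}$ along $\gamma$. Let $J_i$ be the Jacobi field with $J_i(0)=0$, $J_i'(0)=e_i$, and write $J_i(t) = \sum_j A_{ji}(t)\,e_j(t)$. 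Using the relation $J_i(t) = (d\exp_x)_{t\gamma'(0)}(t e_i)$ together with the identity above yields $\omega_x(y) = |\det A(1)|$. Applying the same construction to the reversed geodesic $\tilde\gamma(t)=\gamma(1-t)$ gives a matrix $\tilde A$ with $\omega_y(x) = |\det \tilde A(1)|$.

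The main obstacle is showing $|\det A(1)| = |\det \tilde A(1)|$. For this I would invoke the Wronskian identity: for any pair of Jacobi fields $J,K$ along $\gamma$, the quantity $\langle J',K\rangle - \langle J,K'\rangle$ is constant in $t$. Expressing $\tilde A(s)$ in terms of Jacobi fields along $\gamma$ vanishing at $t=1$ and pairing them with the $J_j$, the Wronskian relation evaluated at the two endpoints of $\gamma$ forces $A(1)^{\top} = \pm\,\tilde A(1)$, and in particular $|\det A(1)| = |\det \tilde A(1)|$. This is the classical symmetry of the two-point Jacobian; modulo this standard ODE computation the proposition is complete.
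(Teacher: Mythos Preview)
Your treatment of (1)--(4) matches the paper's: both read these directly off the local expression in normal coordinates (equivalently, the identity $\omega_x(y)=|\det(d\exp_x)_v|$, which is the Jacobi-field description the paper cites from Besse). For (5) you take a genuinely different route. The paper introduces the canonical geodesic involution $i(v)=-\dot\gamma_v(1)$ on $T\M$ and simply quotes Lemma~\ref{lem:involution} (from Besse), $\omega(v)=\omega(i(v))$; you instead give a self-contained Jacobi-field argument, using the constancy of the Wronskian $\langle J',K\rangle-\langle J,K'\rangle$ to obtain $\tilde A(1)=A(1)^\top$ and hence equal determinants. Your approach makes the mechanism of the symmetry explicit and avoids the external reference; the paper's is terser but defers the actual work. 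One small correction: your parenthetical ``otherwise one or both sides vanish by (4)'' is not right---points beyond the cut locus need not be conjugate (e.g.\ on a flat torus). This does no harm: $\omega_x(y)$ as a function of $y$ is only well defined in a normal neighborhood anyway, so the restriction is automatic rather than a reduction, and your Wronskian computation in fact goes through for any $v$ in the domain of $\exp_x$.
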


Properties (1), (2) and (3) follow directly from the local expression of $\omega_x$ (for (2), recall that in normal coordinates around $x$, $g_{ij}(0) = \delta_{ij}$). Property (4) follows from the definition of the volume density in terms of Jacobi fields (see \cite[6.3]{besse}): $\omega_x(y) = 0$ when there is a non--zero Jacobi field vanishing at $x$ and $y$. Finally, (5) is a direct consequence of the invariance of $\omega$ under the canonical geodesic involution (see Definition \ref{def:involution} and Lemma \ref{lem:involution} below).

\begin{definition}\label{def:involution} The \emph{canonical geodesic involution} $i$ on the tangent bundle $T\M$ of $\M$ is defined as follows. Denote by $\Omega\subset T\M$ the domain of definition of the exponential map and by $\gamma_v$ the geodesic $\gamma_v(t) = \exp_x(tv)$, $t\in [0,1]$, where $v\in T_x\M$. Then $i:\Omega \ra \Omega$ is the map
\[
i(v) = -\dot{\gamma}_v(1).
\]
\end{definition}

In other words, if $v\in T_x\M$ and $y = \exp_xv$, then $i(v)$ is the unique vector $w\in T_y\M$ such that $x = \exp_yw$.

\begin{lemma}\cite[Lemma 6.12]{besse}\label{lem:involution} Let $v\in T_x\M$ and $y = \exp_xv$. Let us denote, abusing of notation, $\omega(v) = \omega_x(y)$. Then $\omega(v) = \omega(i(v))$.
\end{lemma}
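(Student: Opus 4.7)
My plan is to recognize the claim as the symmetry $\omega_x(y) = \omega_y(x)$ and to prove it by expressing both $\omega(v)$ and $\omega(i(v))$ as absolute determinants of the differentials of the exponential maps based at $x$ and at $y$ respectively, and then relating those two matrices through a Wronskian-type identity for Jacobi fields.

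First I would fix orthonormal bases $\{e_i\} \subset T_x\M$ and $\{f_j\} \subset T_y\M$. Using the fact that in normal coordinates around $x$ the matrix $[g_{ij}]$ at $y=\exp_x(v)$ is the Gram matrix of the vectors $(d\exp_x)_v(e_i)$, one obtains $\omega_x(y) = |\det A|$, where $A_{ij} := \langle (d\exp_x)_v(e_i), f_j\rangle_y$. Invoking the classical Jacobi-field description of $(d\exp_x)_v$, we have $(d\exp_x)_v(e_i) = J_i(1)$, with $J_i$ the Jacobi field along $\gamma_v$ satisfying $J_i(0)=0$ and $J_i'(0) = e_i$, so $A_{ij} = \langle J_i(1), f_j\rangle_y$. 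The same recipe applied to $\exp_y$ at $i(v)$ along the reversed geodesic $\tilde\gamma(s) = \gamma_v(1-s)$ gives $\omega_y(x) = |\det B|$ with $B_{ji} = \langle \tilde J_j(1), e_i\rangle_x$, where $\tilde J_j$ is the Jacobi field along $\tilde\gamma$ with $\tilde J_j(0)=0$ and $\tilde J_j'(0)=f_j$.

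The heart of the argument is to show that $B = A^{\top}$. I would bring the reversed Jacobi fields back onto $\gamma_v$ by setting $K_j(t) := \tilde J_j(1-t)$. Using $\dot{\tilde\gamma}(s) = -\dot\gamma_v(1-s)$ together with the bilinearity of the curvature tensor (so that the two sign changes cancel in $R(\tilde J_j, \dot{\tilde\gamma})\dot{\tilde\gamma}$), a short computation shows that $K_j$ is a Jacobi field along $\gamma_v$ with $K_j(0) = \tilde J_j(1)$, $K_j(1) = 0$, and $K_j'(1) = -f_j$. Now the Lagrange identity, which asserts that for any two Jacobi fields $J,K$ along a geodesic the Wronskian $\langle J',K\rangle - \langle J,K'\rangle$ is constant in $t$ (a direct consequence of the Jacobi equation combined with the symmetry $\langle R(X,\dot\gamma)\dot\gamma,Y\rangle = \langle R(Y,\dot\gamma)\dot\gamma,X\rangle$), applied to $J_i$ and $K_j$ and equated at $t=0$ and $t=1$, yields
\[
\langle e_i,\, \tilde J_j(1)\rangle_x \;=\; \langle J_i(1),\, f_j\rangle_y,
\]
which reads $B_{ji} = A_{ij}$, and therefore $|\det B| = |\det A|$.

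I expect the only genuinely delicate point to be the time-reversal bookkeeping: carefully verifying that $K_j$ satisfies the Jacobi equation along $\gamma_v$ with the indicated boundary data, and that $(d\exp_y)_{i(v)}$ is correctly described by the reversed Jacobi fields $\tilde J_j$. Once these identifications are pinned down, the claimed equality collapses to a single application of the Lagrange identity.
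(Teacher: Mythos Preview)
The paper does not give its own proof of this lemma; it simply quotes it from Besse (Lemma~6.12). So there is no in--paper argument to compare against.

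Your proposal is correct and is essentially the standard Jacobi--field proof one finds in Besse. Identifying $\omega_x(y)=|\det A|$ with $A_{ij}=\langle J_i(1),f_j\rangle$ and $\omega_y(x)=|\det B|$ with $B_{ji}=\langle \tilde J_j(1),e_i\rangle$ is exactly right, and your time--reversal computation is sound: $K_j(t)=\tilde J_j(1-t)$ is indeed a Jacobi field along $\gamma_v$ with $K_j(1)=0$ and $K_j'(1)=-f_j$, since the two sign flips coming from $\dot{\tilde\gamma}=-\dot\gamma_v(1-\cdot)$ cancel in the curvature term. Evaluating the constant Wronskian $\langle J_i',K_j\rangle-\langle J_i,K_j'\rangle$ at $t=0$ and $t=1$ then gives $B_{ji}=A_{ij}$, hence $|\det B|=|\det A|$. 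The only places that need care are exactly the ones you flag (the reversed covariant derivative and the identification of $(d\exp_y)_{i(v)}$ with the reversed Jacobi fields), and both check out.
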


\begin{definition} Let $\M$ be a Riemannian manifold and let $x\in \M$ be a point. We will say that $\M$ is \emph{locally harmonic at $x$} if there exists an $\eps > 0$ such that $\omega_x$ is radially symmetric on $B(x,\eps)$. In other words, if there is a function $\Omega_x:[0,\eps) \ra \R$ such that $\omega_x(y) = \Omega_x(d(x,y))$ for every $y\in B(x,\eps)$. We will say that $\M$ is \emph{locally harmonic} if it is locally harmonic at $x$ for every $x\in \M$.
\end{definition}

The Euclidean space $\R^n$ is a simple example of a locally harmonic manifold with $\Omega_x(r) \equiv 1$. The sphere $\S^n$ is also locally harmonic with $\Omega_x(r) = \sin^{n-1}r/r^{n-1}$. Other examples of locally harmonic manifolds are the projective spaces $\R\P^n$, $\C\P^n$, $\H\P^n$ and $\O\P^2$.

\begin{remark} Every locally harmonic manifold of dimension $n > 2$ is an Einstein manifold (see \cite[Chapter 6]{besse}). As a consequence of a theorem by DeTurck--Kazdan \cite[Theorem 5.2]{deturck}, the representation of the metric $g$ in normal coordinates is real analytic. Thus the volume density in normal coordinates is also real analytic. Therefore, if $\omega_x$ is radially symmetric on $B(x,\eps)$ for some $\eps > 0$, then it is also radially symmetric on $B(x,\inj(x))$. In other words, we can take $\eps = \inj(x)$. Moreover, if $\M$ is locally harmonic and $d(x,y) < \inj(\M)$, then by \cite[Proposition 6.16]{besse}, $\omega_x$ and $\omega_y$ are radially symmetric with the same function $\Omega$. This means that we can drop the subscript $x$ in $\Omega_x$.
\end{remark}

Recall (\cite[4.9]{aubin}) that in Riemannian polar coordinates the Laplacian of a radially symmetric function $f(r)$ is given by
\[
-\Delta f(r) = f''(r)+\frac{n-1}{r}f'(r)+f'(r)\partial_r\log\sqrt{\det{g_{ij}(r\theta)}},
\] thus if $\M$ is locally harmonic, then the Laplacian of the distance function $d_x = d(x,\cdot)$ in normal coordinates around $x$ is
\begin{equation}\label{eq:laplacian_radial}
L_x(r):= \Delta d_x(y) = -\frac{n-1}{r}-\frac{\Omega'(r)}{\Omega(r)} = -\frac{d}{dr}\log r^{n-1}\Omega(r), \quad r = d(x,y).
\end{equation}

\begin{remark} Since the right hand side of \eqref{eq:laplacian_radial} does not depend on $x$, we can drop the subscript $x$ in $L_x(r)$ and simply write $L(r)$ as long as $r < \inj(x)$.
\end{remark}

The connection between locally harmonic manifolds and the mean curvature of the geodesic spheres being constant is now clarified in the next proposition. If $x,y\in\M$ are two points with $r = d(x,y) < \inj(x)$, let us denote by $\sigma_{x,y}$ the shape operator of the geodesic sphere $S(x,r)$ at the point $y$. That is, $\sigma_{x,y}(X) = -\nabla_XN(y)$ for any vector field $X$ tangent to $S(x,r)$, where $N = \partial_r$ is the unit outward normal vector field along $S(x,r)$.

\begin{proposition} Let $x,y\in \M$ be two distinct points with $r = d(x,y) < \inj(x)$. Then
\[
\Delta d_x(y) = \tr\,\sigma_{x,y}(y).
\]
\end{proposition}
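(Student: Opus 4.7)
The plan is to compute $\Delta d_x(y)$ directly from the definition $\Delta = -\div \nabla$, decompose the divergence using a frame adapted to the geodesic sphere through $y$, and identify what remains as the trace of the shape operator.

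First, I would note that on the punctured ball $B(x,\inj(x))\setminus\{x\}$ the distance function $d_x$ is smooth, and by the Gauss lemma its gradient is the unit outward radial vector field along geodesics emanating from $x$; that is, $\nabla d_x = \partial_r = N$, where $N$ is exactly the unit outward normal to the geodesic sphere $S(x,r)$ at $y$. Thus
\[
\Delta d_x(y) = -\div N(y).
\]

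Next, I would pick an orthonormal frame $e_1,\dots,e_{n-1}$ of $T_yS(x,r)$ and complete it with $N$ to an orthonormal basis of $T_y\M$. Then
\[
\div N(y) = \sum_{i=1}^{n-1}\langle \nabla_{e_i}N, e_i\rangle + \langle \nabla_N N, N\rangle.
\]
Because $|N|^2\equiv 1$, we have $\langle \nabla_N N, N\rangle = \tfrac{1}{2}N|N|^2 = 0$ (this is the only term where the vanishing needs a brief justification; alternatively, since the integral curves of $N$ are unit-speed radial geodesics, $\nabla_N N = 0$ outright). On the other hand, by the very definition of the shape operator stated in the paper, $\nabla_{e_i}N = -\sigma_{x,y}(e_i)$ for tangential $e_i$, so
\[
\sum_{i=1}^{n-1}\langle \nabla_{e_i}N, e_i\rangle = -\sum_{i=1}^{n-1}\langle \sigma_{x,y}(e_i), e_i\rangle = -\tr\,\sigma_{x,y}(y).
\]

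Combining these two computations gives $\div N(y) = -\tr\,\sigma_{x,y}(y)$, hence $\Delta d_x(y) = \tr\,\sigma_{x,y}(y)$, as desired. There is no real obstacle here: the only subtle points are invoking the Gauss lemma to identify $\nabla d_x$ with $N$, and keeping careful track of the sign conventions (the paper's choices $\Delta = -\div\nabla$ and $\sigma(X) = -\nabla_X N$ compose to give the clean identity without an extra minus sign).
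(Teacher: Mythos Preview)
Your proof is correct and essentially identical to the paper's: both pick an orthonormal frame $\{e_1,\dots,e_{n-1},N\}$ adapted to the sphere, expand $-\div \nabla d_x$ accordingly, kill the $\langle \nabla_N N,N\rangle$ term via $\tfrac12 N|N|^2=0$, and recognize the remaining sum as $\tr\,\sigma_{x,y}$. The only difference is that you explicitly invoke the Gauss lemma to justify $\nabla d_x = N$, which the paper leaves implicit.
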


\begin{proof} Let $\{E_1, ..., E_{n-1}, N\}$ be an orthonormal frame around $y$. Then
\[
\Delta d_x = -\div\nabla d_x = -\tr\nabla N = -\sum_{i=1}^{n-1}g(\nabla_{E_i}N,E_i)-g(\nabla_NN,N) = \tr\,\sigma_{x,y}-g(\nabla_NN,N).
\] But $g(\nabla_NN,N) = \frac{1}{2}Ng(N,N) = 0$. The result follows.
\end{proof}

Recall that the mean curvature of the geodesic sphere at $y$ is defined as $\frac{1}{n-1}\tr\,\sigma_{x,y}$. Therefore, if $\M$ is locally harmonic, then $\Delta d_x$ is radially symmetric, so $\tr\,\sigma_{x,y}$ depends only on $r = d(x,y)$ and hence the mean curvature is constant along $S(x,r)$.

We will prove one more result about $\Delta d_x$ in a locally harmonic manifold.

\begin{proposition}\label{prop:der_vr} Let $\M$ be locally harmonic at $x$ and $0< r < \inj(x)$. Let us denote by $v_x(r)$ the volume of the geodesic sphere $S(x,r)$. Then
\[
-L(r) = \frac{d}{dr}\log v_x(r).
\]
\end{proposition}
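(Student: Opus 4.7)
The plan is to compute $v_x(r)$ explicitly using geodesic polar coordinates and then differentiate, matching the expression for $L(r)$ derived in \eqref{eq:laplacian_radial}.

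First, recall that in normal coordinates around $x$ the volume form pulls back to $\omega_x(y)\,dy$, where $dy$ is the Euclidean Lebesgue measure on $T_x\M$. Passing to polar coordinates $y = \exp_x(r\theta)$ with $\theta\in \S^{n-1}\subset T_x\M$, the Riemannian volume element becomes $\omega_x(r\theta)\,r^{n-1}\,dr\,d\theta$, where $d\theta$ is the standard measure on the unit sphere in $T_x\M$. Therefore, for $0<r<\inj(x)$,
\[
v_x(r) = \int_{\S^{n-1}} \omega_x(r\theta)\,r^{n-1}\,d\theta.
\]

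Now invoke the hypothesis that $\M$ is locally harmonic at $x$: $\omega_x(r\theta) = \Omega(r)$ is independent of $\theta$. Pulling this factor out of the integral gives
\[
v_x(r) = c_{n-1}\,r^{n-1}\,\Omega(r),
\]
where $c_{n-1} = \vol(\S^{n-1})$ is a constant. Taking the logarithmic derivative yields
\[
\frac{d}{dr}\log v_x(r) = \frac{d}{dr}\log\bigl(r^{n-1}\Omega(r)\bigr),
\]
and comparing with the identity
\[
L(r) = -\frac{d}{dr}\log\bigl(r^{n-1}\Omega(r)\bigr)
\]
from \eqref{eq:laplacian_radial} gives $-L(r) = \frac{d}{dr}\log v_x(r)$, as desired. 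There is no real obstacle here; the only thing to be slightly careful about is the justification that $\omega_x$ may be regarded as radially symmetric on the full ball $B(x,\inj(x))$, which is exactly what the remark following the definition of locally harmonic manifolds provides (and which is implicit in writing $\Omega(r)$ without a subscript).
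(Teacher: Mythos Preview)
Your proof is correct and takes a different, more direct route than the paper. The paper argues via the Divergence Theorem: it shows that $\int_{B(x,r)}\Delta d_x\,\dvol = -v_x(r)$ (by applying the Divergence Theorem on an annulus and letting the inner radius tend to zero), then rewrites the left-hand side as $\int_0^r L(t)v_x(t)\,dt$ using the coarea/polar-coordinate decomposition, and finally differentiates in $r$ to obtain $L(r)v_x(r) = -v_x'(r)$. Your argument instead computes $v_x(r)$ explicitly as $\vol(\S^{n-1})\,r^{n-1}\Omega(r)$ and reads the result off directly from the formula \eqref{eq:laplacian_radial} for $L(r)$; in effect you anticipate the content of Lemma~\ref{lem:vr_omega} (stated later in the paper, in the Blaschke setting) and combine it with \eqref{eq:laplacian_radial}. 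The student's approach is shorter and more computational, while the paper's approach is slightly more intrinsic in that it never needs the closed-form expression for $v_x(r)$ in terms of $\Omega(r)$ and works directly with $\Delta d_x$; both are perfectly valid here.
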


\begin{proof} Let $0 < \delta < r$ be any real number, and denote
\[
A(x,\delta,r) = \{y\in \M : \delta \leq d(x,y) \leq r\}.
\] From the Divergence Theorem,
\begin{align*}
-\int_{y\in A(x,\delta,r)}\Delta d_x(y)\dvol(y) &= \int_{y\in S(x,r)}\|\nabla d_x(y)\|^2\text{d}S_r(y)-\int_{y\in S(x,\delta)}\|\nabla d_x(y)\|^2\text{d}S_\delta(y)\\
&= v_x(r)-v_x(\delta),
\end{align*} since $\|\nabla d_x(y)\|\equiv 1$ because $d_x$ is a distance function. Making $\delta \ra 0$, we get on one hand
\[
\int_{y\in B(x,r)}\Delta d_x(y)\dvol(y) = -v_x(r).
\] On the other hand, taking Riemannian polar coordinates around $x$,
\begin{align*}
\int_{y\in B(x,r)}\Delta d_x(y)\dvol(y) &= \int_0^r \int_{y\in S(x,t)}\Delta d_x(y)dS_t(y)dt\\
&= \int_0^r L(t)v_x(t)dt.
\end{align*} Differentiating, we get
\[
-\frac{d}{dr}v_x(r) = L(r)v_x(r),
\] or
\[
-L(r) = \frac{d}{dr}\log v_x(r).
\]
\end{proof}

\begin{remark} Since the left hand side in the equality of Proposition \ref{prop:der_vr} does not depend on the point $x$, we can drop the subscript $x$ in $v_x(r)$ and simply write $v(r)$ as long as $r < \inj(x)$.
\end{remark}

Further equivalent conditions to local harmonicity can be found in \cite[Proposition 6.21]{besse}.

\subsection{The Green function in a locally harmonic manifold} A simple computation yields the formula for the Laplacian of the composition of two functions $\M \stackrel{f}{\rightarrow} \R \stackrel{\phi}{\rightarrow} \R$:
\[
\Delta(\phi\circ f) = -(\phi''\circ f)\|\nabla f\|^2+(\phi'\circ f)\Delta f.
\] If $f = d_x$ and $\M$ is locally harmonic, then according to Theorem \ref{thm:green_f} we can compute the Green function of $\M$ near $x$ by solving the ODE
\begin{equation}\label{eq:the_ode}
\phi''-\phi'L =V^{-1}, \qquad 0 < r < \inj(x).
\end{equation} The integrating factor for this equation is
\[
q(r) = e^{\int -L(r)dr} \stackrel{\text{Prop. \ref{prop:der_vr}}}{=} e^{\log v(r)} = v(r).
\] Hence, the general solution is
\begin{equation}\label{eq:general_sol}
\phi'(r) = \frac{V^{-1}\int v(r)dr+C}{v(r)}, \qquad 0 < r < \inj(x).
\end{equation}

Now we would like to recover $G$ from $\phi$ by setting $G(x,y) = \phi(d(x,y))$. If we write $G$ in this form, then $y\mapsto G(x,y)$ is defined as far as $d(x,y) < \inj(x)$, which is the distance from $x$ to the closest point in its cut locus $\text{Cut}(x)$. If there is some point $z$ further than $\inj(x)$ from $x$, then $G(x,z)$ is not defined. If every point in $\text{Cut}(x)$ was at the same distance $\inj(x)$ from $x$, then we would be able to extend $\phi(d(x,y))$ to the remaining points of $\M$ by continuity. The perfect candidates for recovering $G$ in this fashion are Blaschke manifolds.

\begin{definition}\label{def:blaschke} Let $\M$ be a compact Riemannian manifold. We will say that $\M$ is a \emph{Blaschke manifold} if $\inj(\M) = \text{diam}(\M)$.
\end{definition}

\begin{proposition}\label{prop:blaschke} If $\M$ is a Blaschke manifold, then for every $x\in \M$ and for every $y\in \text{Cut}(x)$, we have that $d(x,y) = \text{diam}(\M)$.
\end{proposition}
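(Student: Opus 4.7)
The plan is to reduce this to the simple observation that $\inj(x)$ coincides with the distance from $x$ to its cut locus, and to exploit the trivial bound $\inj(x) \leq \text{diam}(\M)$ together with the Blaschke hypothesis in the reverse direction.

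First I would recall the standard characterization $\inj(x) = d(x,\text{Cut}(x)) = \inf_{y \in \text{Cut}(x)} d(x,y)$, together with the definition $\inj(\M) = \inf_{x \in \M} \inj(x)$. Since $\M$ is compact, its cut locus is nonempty at every point, and any two points are at distance at most $\text{diam}(\M)$; in particular $\inj(x) \leq \text{diam}(\M)$ for all $x \in \M$.

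Next I would combine this with the Blaschke assumption. By hypothesis, $\inj(\M) = \text{diam}(\M)$, so for every $x$,
\[
\text{diam}(\M) = \inj(\M) \leq \inj(x) \leq \text{diam}(\M),
\]
which forces $\inj(x) = \text{diam}(\M)$ pointwise.

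Finally, for $y \in \text{Cut}(x)$, the definition of injectivity radius gives $d(x,y) \geq \inj(x) = \text{diam}(\M)$, while obviously $d(x,y) \leq \text{diam}(\M)$; equality follows. There is no real obstacle here since the argument is essentially an unwinding of definitions; the only subtlety is knowing that $\text{Cut}(x)$ is nonempty and that the infimum defining $\inj(x)$ is attained (both of which hold because $\M$ is compact and $\text{Cut}(x)$ is closed), but neither is needed in the bare chain of inequalities above.
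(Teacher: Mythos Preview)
Your proposal is correct and follows essentially the same approach as the paper: the paper simply writes the chain $\inj(\M) \leq \inj(x) \leq d(x,y) \leq \text{diam}(\M)$ and notes that the Blaschke hypothesis forces all terms to be equal. Your argument unpacks exactly this chain with a bit more commentary, but the logic is identical.
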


\begin{proof} Let $x\in \M$ be any point and let $y\in \text{Cut}(x)$. Then
\[
\inj(\M) \leq \inj(x) \leq d(x,y) \leq \text{diam}(\M),
\] and all these numbers are equal.
\end{proof}

The following lemma provides sufficient conditions to extend the function $\phi$ to a $C^2$ function defined on $\M$. The proof is standard and it can be found, for example, in \cite[Lemma 4.2.2]{kreyssig}.

\begin{lemma}\label{lem:c2_blaschke} Let $\M$ be a Blaschke manifold of diameter $D = \text{diam}(\M)$ and let $f:(0,D] \ra \R$ be a continuous function which is $C^2$ on $(0,D)$. If $\lim_{r\ra D^-}f'(r) = 0$, then the function $F(x,y) = f(d(x,y))$ is $C^2$ on $\M \times \M$ minus the diagonal.
\end{lemma}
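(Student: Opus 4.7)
The plan is to work locally, splitting according to whether the point $(x_0,y_0)$ with $x_0\neq y_0$ satisfies $d(x_0,y_0)<D$ or $d(x_0,y_0)=D$. The hypothesis $\lim_{r\to D^-}f'(r)=0$ is used only in the second, delicate, case.

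If $0<d(x_0,y_0)<D$ then Proposition \ref{prop:blaschke} gives $y_0\notin\text{Cut}(x_0)$, so $d$ is $C^\infty$ on some product neighbourhood $U\times V$ of $(x_0,y_0)$ along which $d$ takes values in $(0,D)$. Since $f\in C^2((0,D))$, the chain rule yields $F\in C^2(U\times V)$ immediately.

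In the case $d(x_0,y_0)=D$ the distance function itself is not even $C^1$ at $(x_0,y_0)$, so composition cannot be invoked directly. The plan is to work in normal coordinates $X\in T_{x_0}\M$, $W\in T_{y_0}\M$ and study $(X,W)\mapsto F(\exp_{x_0}X,\exp_{y_0}W)$ near $(0,0)$. Away from the cut pair the chain rule gives
\[
\partial_iF=f'(d)\,\partial_id,\qquad \partial_i\partial_jF=f''(d)\,\partial_id\,\partial_jd+f'(d)\,\partial_i\partial_jd.
\]
The entries $\partial_id$ are coordinates of unit tangent vectors to minimizing geodesics and are therefore bounded, while $f'(d)\to 0$ as $d\to D^-$; hence $\partial_iF$ extends continuously to $0$ at the cut pair, showing that $F$ is $C^1$ there with vanishing gradient.

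For the Hessian one exploits the Blaschke structure: near a cut pair, $D-d(x,y)$ behaves qualitatively like the norm of a smooth vector-valued map, so the singularities of $\partial_i\partial_jd$ are of order $(D-d)^{-1}$, while the vanishing of $f'$ at $D$ together with the $C^2$ regularity of $f$ gives $|f'(d)|=O(D-d)$ by a Taylor estimate around $D$. The product $f'(d)\,\partial_i\partial_jd$ is thus bounded. The main obstacle is to see that the full expression for $\partial_i\partial_jF$ is actually \emph{continuous}, and not merely bounded, at the cut pair: individually, the angular dependence of $\partial_i\partial_jd$ and of $\partial_id\,\partial_jd$ prevents either summand from having a limit along arbitrary sequences approaching $(x_0,y_0)$, but the angular parts cancel exactly between the two summands, leaving a symmetric bilinear form that does admit a continuous extension. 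This cancellation, which is the essential use of $\lim_{r\to D^-}f'(r)=0$, is the core technical point; once it is established, $F\in C^2$ on $\M\times\M$ minus the diagonal follows by combining this with the easy first case.
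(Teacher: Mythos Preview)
The paper does not prove this lemma; it only cites \cite[Lemma 4.2.2]{kreyssig}, so there is no in-paper argument to compare against directly.

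Your treatment of the case $d(x_0,y_0)<D$ and your $C^1$ argument at a cut pair are fine. The gap is in the $C^2$ step. You claim that ``the $C^2$ regularity of $f$ gives $|f'(d)|=O(D-d)$ by a Taylor estimate around $D$'', but the hypothesis is only $f\in C^2\bigl((0,D)\bigr)$, with no control whatsoever on $f''$ near $D$; no Taylor expansion at $D$ is available. The asserted exact cancellation of angular parts likewise fails under the stated hypotheses. In the model case $\M=\S^n$ with $x$ fixed and $y=\exp_{-x}W$, one has $d(x,y)=\pi-|W|$ and
\[
\partial_i\partial_jF=\Bigl[f''(\pi-|W|)-\frac{f'(\pi-|W|)}{|W|}\Bigr]\frac{W_iW_j}{|W|^2}+\frac{f'(\pi-|W|)}{|W|}\,\delta_{ij},
\]
so continuity at $W=0$ forces $f''$ to have a limit at $D$. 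Taking, after a smooth cutoff, $f''(r)=\sin\bigl(1/(D-r)\bigr)$ near $D$, one checks that $\lim_{r\to D^-}f'(r)$ exists (and can be made $0$), while $\partial_1^2F$ oscillates along $W=se_1$; hence $F$ is \emph{not} $C^2$ at the antipodal pair. In other words, the lemma as written appears to need an extra hypothesis such as $f\in C^2\bigl((0,D]\bigr)$---which the function $\phi$ of Theorem~\ref{th:green_lh} does satisfy---and your sketch uses that missing hypothesis implicitly both in the Taylor estimate and in the cancellation claim.
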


\begin{theorem}\label{th:green_lh} Let $\M$ be a locally harmonic, Blaschke manifold. Then the Green function of $\M$ is given by $G(x,y) = \phi(d(x,y))$, where
\[
\phi'(r) = -\frac{V^{-1}\int_r^{\inj(\M)}v(t)dt}{v(r)}
\] and $\phi$ is a primitive of $\phi'$ making $\int \phi(d(x,y)) = 0$.
\end{theorem}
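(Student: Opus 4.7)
The plan is to verify that the candidate $G(x,y):=\phi(d(x,y))$ with the stated $\phi$ satisfies each of the defining properties of the Green function in Theorem \ref{thm:green_f}; uniqueness up to an additive constant, together with the normalisation $\int G=0$, will then conclude the argument.

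First, since $\M$ is locally harmonic and Blaschke, the remarks preceding Proposition \ref{prop:der_vr} allow us to set $\inj(x)=\inj(\M)=D$ and drop the subscripts in $\Omega_x$, $L_x$ and $v_x$. Applying the identity $\Delta(\phi\circ f)=-(\phi''\circ f)\|\nabla f\|^2+(\phi'\circ f)\Delta f$ to $f=d_x$, and using $\|\nabla d_x\|\equiv 1$ together with \eqref{eq:laplacian_radial}, we obtain
\[
\Delta_y G(x,y)=-\phi''(r)+\phi'(r)L(r),\qquad r=d(x,y)\in(0,D).
\]
So $\Delta_y G(x,y)=-V^{-1}$ off the diagonal precisely when $\phi$ solves the ODE \eqref{eq:the_ode}. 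By \eqref{eq:general_sol} the general solution is $\phi'(r)=(V^{-1}\int v(r)\,dr+C)/v(r)$, and the formula in the statement corresponds to the unique choice of $C$ making the numerator vanish at $r=D$; this is consistent because the balls $B(x,r)$ exhaust $\M$ as $r\to D^-$, hence $\int_0^D v(t)\,dt=V$.

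Second, I would use Lemma \ref{lem:c2_blaschke} to extend $G$ globally to $\M\times\M$ off the diagonal: this requires proving $\lim_{r\to D^-}\phi'(r)=0$. The numerator $\int_r^D v(t)\,dt$ tends to $0$ trivially. If $v(D)>0$ (as happens in $\R\P^n$, where the cut locus of a point is a projective hyperplane of positive measure) the limit is immediate; if $v(D)=0$ (as in $\S^n$) one controls the rate of vanishing of $v(r)$ near $D$ using the symmetry inherited from the canonical geodesic involution (Lemma \ref{lem:involution}) which reduces the analysis near $D$ to the well--known asymptotics $v(r)\sim c\,r^{n-1}$ as $r\to 0^+$; a single L'Hopital step then yields the claim. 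This boundary analysis at $r=D$ is the main technical obstacle of the proof; everything else reduces to essentially routine verifications.

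Third, I would confirm the distributional identity $\Delta_y G(x,\cdot)=\delta_x-V^{-1}$. Off the diagonal this is the pointwise ODE from the first step. Near $y=x$ the asymptotics $v(r)\sim c\,r^{n-1}$ and $\int_r^D v(t)\,dt\to V$ give $\phi'(r)\sim -1/(c\,r^{n-1})$, matching the Euclidean Green function's gradient singularity. Consequently, for any $\varphi\in C^2(\M)$, integrating $\varphi\,\Delta_y G(x,y)$ over $\M\setminus B(x,\eps)$ and applying the divergence theorem produces a boundary term on $\partial B(x,\eps)$ which converges to $\varphi(x)$ as $\eps\to 0^+$, while the interior contributes $-V^{-1}\int_\M\varphi$; summing the two reproduces property (1) of Theorem \ref{thm:green_f}. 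Finally, the additive constant of integration in $\phi$ is fixed by imposing $\int_\M\phi(d(x,y))\,\dvol(y)=0$, which by property (4) of Theorem \ref{thm:green_f} is a choice independent of $x$, identifying our candidate with the Green function normalised by $\int G=0$.
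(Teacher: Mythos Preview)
Your overall strategy matches the paper's: solve the ODE on $(0,D)$, invoke Lemma~\ref{lem:c2_blaschke} to extend across the cut locus once $\lim_{r\to D^-}\phi'(r)=0$ is secured, then verify the distributional identity by excising $B(x,\eps)$ and applying Green's identity, and finally appeal to uniqueness. The one genuine gap is your handling of the case $v(D)=0$. You propose to use the canonical geodesic involution (Lemma~\ref{lem:involution}) to transport the behaviour of $v$ near $D$ back to the known $v(r)\sim c\,r^{n-1}$ asymptotics near $0$. But that lemma only asserts $\omega_x(y)=\omega_y(x)$, and in a locally harmonic manifold both sides already equal $\Omega(d(x,y))$, so the statement is a tautology here. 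The involution sends a tangent vector of length $r$ at $x$ to one of length $r$ (not $D-r$) at $y=\exp_x v$; it does not produce an ``$r\leftrightarrow D-r$'' symmetry, and hence gives no reduction of the $r\to D^-$ regime to the $r\to 0^+$ regime.

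The paper instead applies L'H\^opital directly to the $0/0$ form: $\lim_{r\to D^-}\phi'(r)=\lim_{r\to D^-}V^{-1}v(r)/v'(r)=\lim_{r\to D^-}V^{-1}\big/\bigl(\tfrac{d}{dr}\log v(r)\bigr)$, and argues that since $\log v(r)\to -\infty$ its derivative must diverge, forcing the quotient to $0$. (That last implication is not automatic either; the cleanest way to make it rigorous is via the real analyticity of $\Omega$ noted in the remark before Proposition~\ref{prop:der_vr}: $v$ then vanishes to some finite order $k\ge 1$ at $D$, whence $v'(r)/v(r)\sim -k/(D-r)\to -\infty$.) Your instinct that ``a single L'H\^opital step'' suffices is therefore correct, but it should be fed the analyticity of $v$ rather than a symmetry coming from Lemma~\ref{lem:involution}.
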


In order to prove Theorem \ref{th:green_lh}, we will need some previous results. Let us denote by $D = \text{diam}(\M) = \inj(\M)$ for $\M$ a Blaschke manifold.

\begin{lemma}\label{lem:vr_omega} Let $\M$ be locally harmonic and Blaschke. Then
\[
v(r) = \vol(\S^{n-1})r^{n-1}\Omega(r), \quad 0 < r < D.
\]
\end{lemma}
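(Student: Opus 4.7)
The plan is to compute the volume $v(r) = v_x(r)$ of the geodesic sphere $S(x,r)$ directly from the definition of the volume density in normal coordinates, and then invoke local harmonicity to simplify.

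Since $\M$ is Blaschke, $\inj(x) \geq \inj(\M) = D$ for every $x \in \M$, so for any $0 < r < D$ the exponential map $\exp_x : B_{T_x\M}(0,r) \to B(x,r)$ is a diffeomorphism and Riemannian polar coordinates centered at $x$ are well defined up to radius $r$. In these coordinates, Definition \ref{def:density} together with the standard change of variables from Cartesian to polar coordinates in $T_x\M \cong \R^n$ gives
\[
\dvol(y) = \omega_x(\exp_x(t\theta))\, t^{n-1}\, dt\, d\theta, \qquad t \in (0,D),\ \theta \in \S^{n-1},
\]
where $d\theta$ denotes the standard surface measure on the unit sphere $\S^{n-1} \subset T_x\M$. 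Consequently the induced volume element on $S(x,t)$ is $t^{n-1}\omega_x(\exp_x(t\theta))\, d\theta$, and so
\[
v_x(r) = \int_{S(x,r)} dS_r = r^{n-1}\int_{\S^{n-1}} \omega_x(\exp_x(r\theta))\, d\theta.
\]

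Now I would invoke the local harmonicity hypothesis at $x$, which by the remark following the definition of locally harmonic manifold (and $r < \inj(x)$) says that $\omega_x(y) = \Omega(d(x,y))$. For $y = \exp_x(r\theta)$ with $\|\theta\| = 1$, we have $d(x,y) = r$, so $\omega_x(\exp_x(r\theta)) = \Omega(r)$, which is independent of $\theta$. Substituting into the integral yields
\[
v_x(r) = r^{n-1}\Omega(r)\int_{\S^{n-1}} d\theta = \vol(\S^{n-1})\, r^{n-1}\Omega(r),
\]
as desired. Since the right-hand side is independent of $x$, this also reconfirms the earlier remark that the subscript $x$ can be dropped from $v_x(r)$.

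There is not really a serious obstacle here: the only subtle point is making sure that for $r < D$ the geodesic sphere $S(x,r)$ really is parametrized by a single copy of $\S^{n-1}$ via the exponential map, which is exactly where the Blaschke hypothesis (ensuring $r < \inj(x)$) is used.
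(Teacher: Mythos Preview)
Your proof is correct and follows exactly the same approach as the paper: compute $v(r)$ in normal (polar) coordinates, use local harmonicity to replace $\omega_x(\exp_x(r\theta))$ by the constant $\Omega(r)$, and integrate over $\S^{n-1}$. The paper condenses this to a single line, while you have (appropriately) made explicit the role of the Blaschke hypothesis in ensuring $r < \inj(x)$ and spelled out the polar-coordinate expression of the volume element.
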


\begin{proof} Computing in normal coordinates,
\[
v(r) = \int_{\theta\in \mathbb{S}(r)}\Omega(r)d\theta = \vol(\S^{n-1})r^{n-1}\Omega(r).
\]
\end{proof}

\begin{lemma}\label{lem:limit_phi} Let $\phi$ be as in Theorem \ref{th:green_lh}. Then
\begin{enumerate}
\item $\lim_{r\ra 0}r^{n-1}\phi'(r) = -\frac{1}{\vol(\S^{n-1})}$.
\item $\lim_{r\ra 0}r^{n-1}\phi(r) = 0.$
\end{enumerate}
\end{lemma}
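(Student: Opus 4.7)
The key observation is that Lemma \ref{lem:vr_omega} gives $v(r) = \vol(\S^{n-1})r^{n-1}\Omega(r)$ for $0 < r < D$, and Proposition \ref{prop:density} (2) ensures $\lim_{r\to 0}\Omega(r) = 1$. In addition, computing the total volume in Riemannian polar coordinates centered at any $x$ (which cover $\M$ up to a measure-zero cut locus, since $\M$ is Blaschke) yields $V = \int_0^D v(t)\,dt$. With these two ingredients, both limits follow by simple asymptotic arguments.

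For part (1), I would substitute the expression of $v$ into the formula for $\phi'$ to get
\[
r^{n-1}\phi'(r) = -\frac{V^{-1}\int_r^D v(t)\,dt}{\vol(\S^{n-1})\,\Omega(r)}.
\]
As $r\to 0^+$, the numerator converges to $V^{-1}\int_0^D v(t)\,dt = 1$ while $\Omega(r)\to 1$, so the whole expression tends to $-1/\vol(\S^{n-1})$, as claimed.

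For part (2), I would apply L'H\^opital's rule to the ratio $\phi(r)/r^{1-n}$. The function $r^{1-n}$ tends to $+\infty$ as $r\to 0^+$ (this uses the standing hypothesis $n>1$), so the $\infty/\infty$ form of L'H\^opital applies provided $\phi(r)$ does not have a finite limit; in any case, one may equivalently compute the limit of the ratio of derivatives and conclude. Differentiating gives
\[
\lim_{r\to 0}\frac{\phi(r)}{r^{1-n}} = \lim_{r\to 0}\frac{\phi'(r)}{(1-n)r^{-n}} = \frac{1}{1-n}\lim_{r\to 0} r\cdot\bigl(r^{n-1}\phi'(r)\bigr),
\]
and by part (1) the last limit equals $\tfrac{1}{1-n}\cdot 0\cdot(-1/\vol(\S^{n-1})) = 0$. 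Rearranging gives $\lim_{r\to 0}r^{n-1}\phi(r) = 0$.

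No step here poses a genuine obstacle; the mild point is ensuring the L'H\^opital application is legitimate regardless of the dimension (in dimension $2$, $\phi$ behaves like $-\log r$ up to constants, and for $n>2$ like a constant times $r^{2-n}$, so in all relevant cases the denominator $r^{1-n}$ diverges and the rule applies). One could alternatively bypass L'H\^opital by integrating the asymptotic $\phi'(r) \sim -r^{1-n}/\vol(\S^{n-1})$ obtained in (1) and checking directly that $r^{n-1}\phi(r)\to 0$ in each case, but the L'H\^opital route is cleaner and uniform in $n$.
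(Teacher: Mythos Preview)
Your proof is correct and essentially identical to the paper's own argument: the paper uses Lemma \ref{lem:vr_omega} and $\Omega(0)=1$, $\int_0^D v(t)\,dt=V$ to get (1), and then applies L'H\^opital to $\phi(r)/r^{1-n}$ exactly as you do to deduce (2). Your additional remarks on the applicability of L'H\^opital are a nice bit of care that the paper leaves implicit.
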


\begin{proof} From Lemma \ref{lem:vr_omega},
\[
\lim_{r\ra 0}r^{n-1}\phi'(r) = \lim_{r\ra 0} \frac{v(r)}{\vol(\S^{n-1})\Omega(r)}\phi'(r) = -\lim_{r\ra 0}\frac{V^{-1}\int_r^D v(t)dt}{\vol(\S^{n-1})\Omega(r)} = -\frac{1}{\vol(\S^{n-1})},
\] since $\Omega(0) = 1$ and $\int_0^D v(t)dt = V$. This proves (1).

Now, by L'H\^opital's rule,
\[
\lim_{r\ra 0}r^{n-1}\phi(r) = \lim_{r\ra 0}\frac{\phi'(r)}{-\frac{n-1}{r^n}} = -\frac{1}{n-1}\lim_{r\ra 0}r\cdot r^{n-1}\phi'(r) = 0,
\] by (1).
\end{proof}

\begin{lemma}\label{lem:phi_integrable} Let $\phi$ be as in Theorem \ref{th:green_lh}. Then the function $y\mapsto \phi(d(x,y))$ is integrable.
\end{lemma}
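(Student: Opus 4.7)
The plan is to reduce the claim to a one-variable statement by passing to geodesic polar coordinates around $x$. Since $\M$ is Blaschke, Proposition \ref{prop:blaschke} tells us that $\text{Cut}(x)$ is entirely contained in the distance sphere $\{y:d(x,y)=D\}$ and therefore has measure zero, while the exponential map is a diffeomorphism from $B(0,D)\subset T_x\M$ onto $\M\setminus\text{Cut}(x)$. Writing the volume form in polar coordinates then gives
\[
\int_{\M}|\phi(d(x,y))|\,\dvol(y)\;=\;\int_0^D|\phi(r)|\,v(r)\,dr,
\]
so it suffices to show that this one-dimensional integral is finite.

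I will then argue that $|\phi(r)|v(r)$ extends to a bounded function on $[0,D]$, which immediately gives integrability on $(0,D)$. Away from the origin this is routine: the defining formula $\phi'(r)=-V^{-1}\int_r^D v(t)\,dt/v(r)$ exhibits $\phi$ as smooth on $(0,D)$, the numerator $\int_r^D v(t)\,dt$ vanishes at least linearly as $r\to D^-$, and $v$ is continuous on $[0,D]$; hence $|\phi|v$ is bounded on $[\eps,D]$ for every $\eps>0$. The only real issue is the singularity of $\phi$ at $r=0$, and this is exactly what Lemmas \ref{lem:vr_omega} and \ref{lem:limit_phi} were designed to handle. Writing $v(r)=\vol(\S^{n-1})r^{n-1}\Omega(r)$ with $\Omega$ smooth and $\Omega(0)=1$, one has
\[
|\phi(r)|v(r)\;=\;\vol(\S^{n-1})\,\Omega(r)\cdot r^{n-1}|\phi(r)|,
\]
and Lemma \ref{lem:limit_phi}(2) supplies $r^{n-1}\phi(r)\to 0$ as $r\to 0^+$, so the product tends to $0$ at the origin.

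Combining the two regimes, $|\phi(r)|v(r)$ is bounded (indeed continuous) on $[0,D]$, its integral over $(0,D)$ is therefore finite, and the integrability of $y\mapsto\phi(d(x,y))$ follows. The only substantive step is the polar change of variables; the control of the singularity at the origin is already packaged into Lemma \ref{lem:limit_phi}(2), so I do not anticipate any serious obstacle.
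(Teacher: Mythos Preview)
Your proposal is correct and follows essentially the same route as the paper: pass to geodesic polar coordinates to reduce to $\int_0^D|\phi(r)|v(r)\,dr=\int_0^D\vol(\S^{n-1})r^{n-1}|\phi(r)|\Omega(r)\,dr$, and then invoke Lemma~\ref{lem:limit_phi}(2) to control the singularity at $r=0$. If anything you are slightly more careful than the paper, which only mentions the behaviour at $r=0$ and leaves the behaviour near $r=D$ implicit; your remarks there are a bit sketchy in the case $v(D)=0$, but that gap is filled (via L'H\^opital) in the proof of Theorem~\ref{th:green_lh} anyway.
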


\begin{proof} Since $\M$ is Blaschke, computing in normal coordinates we have that
\begin{align*}
\int_{y\in \M}|\phi(d(x,y))|\dvol(y) &= \int_{y\in B(x,D)}|\phi(d(x,y))|\dvol(y)\\
&= \int_0^D \int_{\theta\in\mathbb{S}^{n-1}(r)}|\phi(r)|\Omega(r)d\theta dr\\
&= \int_0^D \vol(\S^{n-1})r^{n-1}|\phi(r)|\Omega(r)dr,
\end{align*} which is finite because $\lim_{r\ra 0}r^{n-1}\phi(r) = 0$ from Lemma \ref{lem:limit_phi}.
\end{proof}

\begin{proof}[Proof of Theorem \ref{th:green_lh}] We first check that $\phi'$ can be extended continuously to $r = D$. That is, that the limit $\lim_{r\ra D^-}\phi'(r)$ exists. If $\lim_{r\ra D^-}v(r) \neq 0$, then clearly $\lim_{r\ra D^-}\phi'(r) = 0$. Assume that $\lim_{r\ra D^-}v(r) = 0$. We can apply L'H\^opital's rule to compute
\[
\lim_{r\ra D^-}\phi'(r) =  \lim_{r\ra D^-}\frac{V^{-1}v(r)}{v'(r)} = \lim_{r\ra D^-}V^{-1}\frac{1}{\frac{d}{dr}\log v(r)} = 0,
\] since $v(r) \ra 0$, $\log v(r) \ra -\infty$ and then $\frac{d}{dr}\log v(r) \ra +\infty$. Thus, not only $\lim_{r\ra D^-}\phi'(r)$ exists, but also it equals $0$. By Lemma \ref{lem:c2_blaschke}, the function $F(x,y) = \phi(d(x,y))$ is $C^2$ on $\M \times \M$ minus the diagonal. Being a solution of \eqref{eq:the_ode}, $\Delta_yF(x,y) = -V^{-1}$ if $d(x,y) < D$, but because it is $C^2$ on $\M$, $\Delta_yF(x,y) = -V^{-1}$ on $\M \times \M$ minus the diagonal.

Now let $u \in C^2(\M)$ be any function. Let $0 < \eps < \inj(\M)$ be any real number and let us denote $B_\eps = B(x,\eps)$, $B_\eps^c = \M \setminus B(x,\eps)$ and $S_\eps = S(x,\eps)$. Then
\begin{align*}
\int_{y\in \M}\Delta u(y)\phi(d(x,y))\dvol(y) = \int_{y\in B_\eps}\Delta u(y)\phi(d(x,y))\dvol(y)+\int_{y\in B_\eps^c}\Delta u(y)\phi(d(x,y))\dvol(y).
\end{align*} Let $I_1(\eps)$ be the first integral on the right hand side of the equality above and let $I_2(\eps)$ be the second one. From Green's Second Identity, since the outward unit normal to $B_\eps^c$ is $N = -\partial_r = -\nabla d_x$,
\begin{align*}
I_2(\eps) &= \int_{y\in S_\eps}\left[-u(y)\partial_r\phi(d(x,y))+\phi(d(x,y))\partial_ru(y)\right]dS_\eps(y)+\int_{y\in B_\eps^c}u(y)\Delta_y \phi(d(x,y))\dvol(y)
\end{align*} Again, let $I_3(\eps)$ be the first integral on the right hand side and let $I_4(\eps)$ be the second one. Now, computing in normal coordinates,
\begin{align*}
I_3(\eps) &= -\phi'(\eps)\int_{y\in S_\eps}u(y)dS_\eps(y)+\phi(\eps)\int_{y\in S_\eps}\partial_ru(y)dS_\eps(y)\\
&= -\phi'(\eps)\int_{\theta\in \mathbb{S}^{n-1}(\eps)}u(\exp_x\theta)\Omega(\eps)d\theta+\phi(\eps)\int_{\theta\in\mathbb{S}^{n-1}(\eps)}\partial_ru(\exp_x\theta)\Omega(\eps)d\theta\\
&= -\phi'(\eps)\Omega(\eps)\int_{\theta\in \S^{n-1}}\eps^{n-1}u(\exp_x\eps\theta)d\theta+\phi(\eps)\Omega(\eps)\int_{\theta\in \S^{n-1}}\eps^{n-1}\partial_ru(\exp_x\eps\theta)d\theta\\
&= -\phi'(\eps)\eps^{n-1}\Omega(\eps)\int_{\theta\in \S^{n-1}}u(\exp_x\eps\theta)d\theta+\phi(\eps)\eps^{n-1}\Omega(\eps)\int_{\theta\in \S^{n-1}}\partial_ru(\exp_x\eps\theta)d\theta\\
\end{align*} Since $\Omega(0) = 1$ and, from Lemma \ref{lem:limit_phi} we have
\[
\lim_{\eps\ra 0}-\phi'(\eps)\eps^{n-1} = \frac{1}{\vol(\S^{n-1})}, \quad \text{ and } \quad \lim_{\eps\ra 0}\phi(\eps)\eps^{n-1} = 0,
\] we conclude that
\[
\lim_{\eps \ra 0}I_3(\eps) = u(x).
\] Since $y\mapsto \phi(d(x,y))$ is integrable by Lemma \ref{lem:phi_integrable} and $u$ bounded,
\[
\lim_{\eps \ra 0}I_1(\eps) = 0.
\] Finally, if $y\in B_\eps^c$, then $\Delta_y\phi(d(x,y)) = -V^{-1}$, so
\[
\lim_{\eps \ra 0}I_4(\eps) = -V^{-1}\int_{y\in \M}u(y)\dvol(y).
\] For every $0 < \eps < \inj(\M)$ we have that
\[
\int_{y\in \M}\Delta u(y)\phi(d(x,y))\dvol(y) = I_1(\eps)+I_3(\eps)+I_4(\eps).
\] In particular, the integral on the left hand side equals
\[
\lim_{\eps \ra 0}(I_1(\eps)+I_3(\eps)+I_4(\eps)) = u(x)-V^{-1}\int_{y\in \M}u(y)\dvol(y).
\] But according to Theorem \ref{thm:green_f}, this is exactly how the Green function acts on $C^2$ functions. Since the Green function is uniquely defined by this action up to a constant and we assume that both $\phi(d(x,y))$ and $G(x,y)$ have zero integral, necessarily $\phi(d(x,y)) = G(x,y)$.
\end{proof}

\begin{remark} In the proof of Theorem \ref{th:green_lh} we distinguished between the cases
\[
\lim_{r\ra D^-}v(r) = 0 \quad \text{and} \quad \lim_{r\ra D^-}v(r) \neq 0.
\] An example of the first case is the sphere $\S^2$, where the geodesic spheres $S(x,r)$ collapse to a point when $r \ra \pi^-$. The situation is different in the case of $\R\mathbb{P}^2$, for example. If we think of the half--sphere model of $\R\P^2$, geodesic spheres departing from the north pole grow in volume until they reach the equator, which is the cut locus of the north pole, and then they go back again until they collapse to the north pole. The reason for this is that the points in the cut locus in the case of $\S^2$ are conjugate points (hence $\Omega(D) = 0$), while these points are not conjugate in $\R\P^2$ (thus $\Omega(D) \neq 0$).
\end{remark}

\subsection{Some examples} The Compact Rank One Symmetric Spaces (CROSS) are known to be examples of locally harmonic Blaschke manifolds (see \cite[6.18]{besse}). These spaces were classified by \'E. Cartan and they are the sphere $\S^n$, the projective spaces $\R\P^n$, $\C\P^n$ and $\H\P^n$, and the Cayley plane $\O\P^2$. No other examples of locally harmonic Blaschke manifolds are known. In fact, the Lichnerowicz Conjecture claims that the CROSS are the only Riemannian manifolds of this kind.

All we have to know in order to compute the Green function in the CROSS is the corresponding volume density and then use Theorem \ref{th:green_lh}, since, from Lemma \ref{lem:vr_omega}, $v(r) = \vol(\S^{n-1})r^{n-1}\Omega(r)$ and thus
\[
\phi'(r) = -\frac{V^{-1}\int_r^Dt^{n-1}\Omega(t)dt}{r^{n-1}\Omega(r)}.
\] In \cite[Proposition 3.3.1]{kreyssig} these densities have been computed and the results are shown in Figure \ref{fig:table_w} (we assume that the diameter of the  projective spaces is equal to $\pi/2$).

\begin{figure}[h]
\centering
\bgroup
\def\arraystretch{1.5}
\begin{tabular}{|l|r|}
\hline
$\M$ & $r^{n-1}\Omega(r)$\\
\hline
\hline
$\S^n$ & $\sin^{n-1}r$\\
\hline
$\R\P^n$ & $2^{n-1}\sin^{n-1}r$\\
\hline
$\C\P^n$ & $2^{2n-1}\sin^{2n-1}r\cos r$\\
\hline
$\H\P^n$ & $2^{4n-1}\sin^{4n-1}r\cos^3r$\\
\hline
$\O\P^2$ & $2^{15}\sin^{15}r\cos^7r$\\  
\hline
\end{tabular}
\egroup
\caption{Volume densities of the CROSS.}\label{fig:table_w}
\end{figure}

In particular, for $\M = \S^2$, we have
\[
\phi'(r) = -\frac{(4\pi)^{-1}\int_r^\pi \sin t\,dt}{\sin r} = -\frac{1+\cos(r)}{4\pi\sin r},
\] and
\[
\phi(r) = -\frac{1}{2\pi}\log\sin \frac{r}{2}+C
\] In terms of Euclidean distance,
\[
G(x,y) = \frac{1}{2\pi}\log\|x-y\|^{-1}+C,
\] which is the same (up to rescaling) as the logarithmic kernel.

More generally, using the Gauss hypergeometric function and some classical transformation formulas (see \cite[8.391 and 9.131]{losrusos}) one can get the expression for the Green function of $\S^n$ in terms of the Euclidean distance: $G(x,y) = \hat{\phi}(\|x-y\|)$, where
\[
\hat{\phi}(t) = \frac{1}{\vol(\S^n)}\int_{\frac{t^2}{4}}^1 \frac{\text{B}_{1-x}\left(\frac{n}{2},\frac{n}{2}\right)}{(x-x^2)^{n/2}}dx + C = \frac{2}{n\vol(\S^n)}\int_{\frac{t^2}{4}}^1 {}_2F_1\left(1,n,\frac{n}{2}+1,1-s\right)ds + C.
\]

We can also compute the Green function for the projective spaces (in terms of the intrinsic distance). For instance, in the case of $\C\P^3$ and $\C\P^4$, we have that $G(x,y) = \phi(d(x,y))$, where
\[
\phi(r) = \frac{1}{24\vol(\C\P^3)}\left(\frac{1}{\sin^4r}+\frac{2}{\sin^2r}-4\log\sin r\right)+C
\] for $\C\P^3$ and
\[
\phi(r) = \frac{1}{96\vol(\C\P^4)}\left(\frac{2}{\sin^6r}+\frac{3}{\sin^4r}+\frac{6}{\sin^2r}-12\log\sin r\right)+C
\] for $\C\P^4$.

\section{Proof of Theorem \ref{th:main}}

Our aim is to prove that if $\omega_N^*$ is a sequence of minimizers for the Green energy, then the normalized counting measure $\frac{1}{N}\sum_{x\in \omega_N^*}\delta_x$ converges weak$^*$ to the Riemannian (uniform) probability measure $d\lambda = V^{-1}\dvol$. That is to say that the $\omega_N^*$ are asymptotically uniformly distributed. To this end, we will make use of classical potential theory, taking \cite{book} as a primary reference. Although in \cite{book} all the results are stated for an infinite compact subset of $\R^n$, all the proofs are valid for a Riemannian manifold of positive dimension.

\subsection{Energies and equilibrium measures} Let $\M$ be a compact manifold. A \emph{kernel} is a map $K:\M\times \M \ra \R\cup\{+\infty\}$. If $K$ is symmetric and lower semicontinuous, we define the \emph{(discrete) $K$--energy} of a multiset $\omega_N = \{x_1, ..., x_N\}$ by
\[
E_K(x_1, ..., x_N) = \sum_{i\neq j}K(x_i,x_j).
\] We denote the minimum discrete energy by
\[
\mathcal{E}_K(N) = \min_{\omega_N}E_K(\omega_N).
\] The limit
\[
\tau_K(\M) = \lim_{N\ra \infty}\frac{\mathcal{E}_K(N)}{N^2},
\] (which might be infinite) always exists (see \cite{book}) and is called the \emph{$K$--transfinite diameter} of $\M$. If $\mu$ is some Borel probability measure supported on $\M$, then the \emph{$K$--potential} of $\mu$ is the function defined by
\[
U_K^\mu(x) = \int_{y\in \M}K(x,y)d\mu(y).
\] The \emph{(continuous) $K$--energy} of $\mu$ is the value
\[
\mathcal{I}_K[\mu] = (\mu\otimes\mu)(K) = \int_{x,y\in \M}K(x,y)d\mu(x)d\mu(y) = \int_{x\in \M}U_K^\mu(x)d\mu(x).
\] An \emph{equilibrium measure} (if it exists) is a Borel probability measure $\mu^*$ minimizing the continuous $K$--energy. That is,
\[
\mathcal{I}_K[\mu^*] = \inf_{\mu}\mathcal{I}_K[\mu] =: W_K(\M),
\] where the infimum is taken among all Borel probability measures supported on $\M$. We call $W_K(\M)$ the \emph{Wiener constant}.

\begin{remark}\label{rmk:nunu_int} The definition of $\mathcal{I}_K$ extends naturally to finite signed measures $\nu$ by writing the Jordan--Hahn decomposition of $\nu\otimes\nu$ as
\[
\nu \otimes \nu = \nu^+\otimes\nu^++\nu^-\otimes\nu^--\nu^+\otimes\nu^--\nu^-\otimes\nu^+,
\] where $\nu = \nu^+-\nu^-$ and $\nu^+$, $\nu^-$ are mutually singular non--negative finite measures, assuming that at least one of the sums $(\nu^+\otimes\nu^+)(K)+(\nu^-\otimes\nu^-)(K)$ or $(\nu^+\otimes\nu^-)(K)+(\nu^-\otimes\nu^+)(K)$ is finite. Note that, since $K$ is bounded from below, $|\mathcal{I}_K[\nu]|< \infty$ implies that $K$ is $(\nu\otimes\nu)$--integrable.
\end{remark}

First we will be concerned about the existence and uniqueness of the equilibrium measure. The existence of a measure $\mu^*$ such that $\mathcal{I}_K[\mu^*] = W_K(\M)$ is always guaranteed (see \cite{book}). However, for uniqueness we need a bit more work.

\begin{definition} A kernel $K$ is called \emph{strictly conditionally positive definite} if for every signed finite Borel measure $\nu$ supported on $\M$ such that $\nu(\M) = 0$ and the quantity
\[
\mathcal{I}_K[\nu] := \int_{x,y\in \M}K(x,y)d\nu(x)d\nu(y)
\] is finite, we have $\mathcal{I}_K[\nu] \geq 0$ and $\mathcal{I}_K[\nu] = 0$ if and only if $\nu \equiv 0$.
\end{definition}

Compare this definition to its discrete counterpart \cite[Chapter 31]{CheneyLight}.

\begin{theorem}\cite{book}\label{th:book_1} Suppose that the kernel $K$ is symmetric, lower semicontinuous, strictly conditionally positive definite and that $W_K(\M) < \infty$. If there is some probability measure $\mu^*$ for which the potential $U_K^{\mu^*}(x)$ has a constant finite value $c$, then $\mu^*$ is the unique equilibrium measure and $\mathcal{I}_K[\mu^*] = W_K(\M) = c$.
\end{theorem}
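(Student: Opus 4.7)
The plan is to follow the standard potential--theoretic variational argument: I will evaluate $\mathcal{I}_K[\mu^*]$ directly, then expand $\mathcal{I}_K[\mu]$ for a competitor $\mu$ as a quadratic form about $\mu^*$, and finally invoke strict conditional positive definiteness to conclude optimality and uniqueness.

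First, I would observe that because $U_K^{\mu^*}(x) \equiv c$, Fubini gives
\[
\mathcal{I}_K[\mu^*] \;=\; \int_{\M} U_K^{\mu^*}(x)\,d\mu^*(x) \;=\; c,
\]
which in particular shows $c$ is finite (the lower semicontinuity of $K$ on $\M \times \M$ compact bounds $K$ below, so the integrals are always well defined, possibly $+\infty$). Next, let $\mu$ be any probability measure on $\M$. If $\mathcal{I}_K[\mu] = +\infty$ then trivially $\mathcal{I}_K[\mu] \geq c$, so I may assume $\mathcal{I}_K[\mu] < \infty$, set $\nu := \mu - \mu^*$, and note $\nu(\M) = 0$.

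The main step is to justify the expansion
\[
\mathcal{I}_K[\mu] \;=\; \mathcal{I}_K[\mu^*] + 2\!\int_{\M} U_K^{\mu^*}\,d\nu + \mathcal{I}_K[\nu].
\]
Because $K$ is bounded below, $\mathcal{I}_K[\mu^*] = c$ and $\mathcal{I}_K[\mu]$ are finite, and $U_K^{\mu^*} \equiv c$ is bounded, each of the four bilinear terms arising from $\mu = \mu^* + \nu$ is integrable, so the decomposition is valid in the sense of Remark \ref{rmk:nunu_int}, and in particular $\mathcal{I}_K[\nu]$ is finite. The cross term collapses:
\[
\int_{\M} U_K^{\mu^*}\,d\nu \;=\; c\,\nu(\M) \;=\; 0.
\]
Hence $\mathcal{I}_K[\mu] = c + \mathcal{I}_K[\nu]$.

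The strict conditional positive definiteness hypothesis, applied to the signed measure $\nu$ (which has total mass zero and finite energy), yields $\mathcal{I}_K[\nu] \geq 0$, with equality if and only if $\nu \equiv 0$, i.e.\ $\mu = \mu^*$. Therefore $\mathcal{I}_K[\mu] \geq c$ for every probability measure $\mu$, with equality only for $\mu = \mu^*$; this simultaneously identifies $\mu^*$ as the unique minimizer and gives $W_K(\M) = \mathcal{I}_K[\mu^*] = c$. The only genuinely delicate point is the integrability bookkeeping needed to split $\mathcal{I}_K[\mu]$ into the three pieces above; once the lower bound on $K$ and the finiteness of $\mathcal{I}_K[\mu]$, $\mathcal{I}_K[\mu^*]$ are in hand, the rest is formal.
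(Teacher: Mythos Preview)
The paper does not supply its own proof of this theorem; it is quoted from the reference \cite{book} without argument. Your proposal is the standard variational proof and is correct: the only substantive point is the integrability bookkeeping, and you handle it properly by using the lower bound on $K$ (from lower semicontinuity on the compact $\M\times\M$) together with the finiteness of $\mathcal{I}_K[\mu]$ and $\mathcal{I}_K[\mu^*]=c$ to ensure each of the four bilinear pieces in the expansion is finite, so that $\mathcal{I}_K[\nu]$ is well defined and the decomposition $\mathcal{I}_K[\mu]=c+\mathcal{I}_K[\nu]$ is legitimate.
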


Assuming that there is a unique equilibrium measure $\mu^*$, the following result tells us that the normalized counting measure converges weak$^*$ to $\mu^*$.

\begin{theorem}\cite{book}\label{th:book_2} If $K$ is lower semicontinuous and symmetric, then
\[
\tau_K(\M) = W_K(\M).
\] Moreover, if $\{\omega_N\}_N$ is any sequence of configurations such that
\[
\lim_{N\ra \infty}\frac{E_K(\M)}{N^2} = \tau_K(\M),
\] then every weak$^*$ limit measure $\nu^*$ of the normalized counting measures
\[
\nu(\omega_N) = \frac{1}{N}\sum_{x\in \omega_N}\delta_x
\] is an equilibrium measure for the continuous energy problem.
\end{theorem}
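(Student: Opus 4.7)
The plan is to establish the equality $\tau_K(\M)=W_K(\M)$ via the two opposite inequalities, and then to identify any weak$^*$ subsequential limit of the normalised counting measures as an equilibrium measure essentially as a byproduct of the lower bound. As a preliminary step I would justify that $\tau_K(\M)$ exists as an element of $\R\cup\{+\infty\}$: a standard leave-one-out computation using symmetry of $K$ gives $\sum_{k=1}^{N}E_K(\omega_N\setminus\{x_k\})=(N-2)E_K(\omega_N)$, so averaging and taking $\omega_N$ to be a minimiser yields $\mathcal{E}_K(N-1)\le\frac{N-2}{N}\mathcal{E}_K(N)$. Hence $\mathcal{E}_K(N)/(N(N-1))$ is monotone non-decreasing and converges, which I can then identify with $\lim\mathcal{E}_K(N)/N^2=\tau_K(\M)$.

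For the upper bound $\tau_K(\M)\le W_K(\M)$, I would use a Fubini-style averaging against any Borel probability measure $\mu$ on $\M$: since $K$ is bounded below and symmetric,
\[
\int_{\M^N}E_K(x_1,\dots,x_N)\,d\mu(x_1)\cdots d\mu(x_N)=N(N-1)\,\mathcal{I}_K[\mu],
\]
so there exists a configuration with $\mathcal{E}_K(N)\le N(N-1)\,\mathcal{I}_K[\mu]$. Dividing by $N^2$ and taking $N\to\infty$ gives $\tau_K(\M)\le\mathcal{I}_K[\mu]$, and an infimum over $\mu$ delivers the bound.

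For the lower bound $W_K(\M)\le\tau_K(\M)$ combined with the weak$^*$ statement, fix an asymptotically minimising sequence $\{\omega_N\}$, set $\nu_N=\frac1N\sum_{x\in\omega_N}\delta_x$, and let $\nu^*$ be any weak$^*$ limit along a subsequence $N_k\to\infty$. To dodge the singularity of $K$ on the diagonal, I would truncate at level $M$, setting $K_M=\min(K,M)$, which is bounded, lower semicontinuous and increases pointwise to $K$. The portmanteau inequality for lower semicontinuous bounded functions on $\M\times\M$ then gives
\[
\mathcal{I}_{K_M}[\nu^*]=(\nu^*\otimes\nu^*)(K_M)\le\liminf_{k\to\infty}(\nu_{N_k}\otimes\nu_{N_k})(K_M).
\]
Splitting $(\nu_N\otimes\nu_N)(K_M)$ into the diagonal contribution (at most $M/N$ by the trivial bound $K_M\le M$) and the off-diagonal contribution (at most $E_K(\omega_N)/N^2$ since $K_M\le K$), the right-hand side is bounded above by $\tau_K(\M)$. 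Letting $M\to\infty$ and applying monotone convergence (valid because $K$ is bounded below) yields $\mathcal{I}_K[\nu^*]\le\tau_K(\M)$. Since trivially $\mathcal{I}_K[\nu^*]\ge W_K(\M)$, this gives both the remaining inequality and the identification $\mathcal{I}_K[\nu^*]=W_K(\M)$, so $\nu^*$ is an equilibrium measure.

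The main obstacle is the handling of the singularity of $K$ on the diagonal of $\M\times\M$: the discrete energies $E_K$ deliberately omit diagonal terms, but the product measures $\nu_N\otimes\nu_N$ still put mass $1/N$ on the diagonal, where $K$ may equal $+\infty$. The truncation $K_M$ together with the lower-semicontinuous portmanteau inequality is precisely the device that lets one commute the weak$^*$ limit past the kernel without encountering this singularity, after which monotone convergence in $M$ restores the sharp statement for $K$ itself.
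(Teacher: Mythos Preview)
Your proof sketch is correct and is essentially the classical argument for this Fekete--Szeg\H{o}/fundamental theorem of potential theory: monotonicity of $\mathcal{E}_K(N)/(N(N-1))$ for existence of $\tau_K$, the Fubini averaging trick for $\tau_K\le W_K$, and truncation $K_M=\min(K,M)$ together with the lower-semicontinuity portmanteau inequality and monotone convergence for $W_K\le\tau_K$ along with the identification of weak$^*$ limits as equilibrium measures.

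Note, however, that the paper does \emph{not} prove this theorem at all: it is stated with the citation \cite{book} and used as a black box in the proof of the main Theorem~\ref{th:main}. So there is no ``paper's own proof'' to compare against here; your sketch is precisely the standard argument one finds in the cited reference. One small technical point worth making explicit in a formal write-up is that weak$^*$ convergence $\nu_{N_k}\stackrel{*}{\rightharpoonup}\nu^*$ on the compact space $\M$ implies $\nu_{N_k}\otimes\nu_{N_k}\stackrel{*}{\rightharpoonup}\nu^*\otimes\nu^*$ on $\M\times\M$, which is what licenses the portmanteau step for the product measures.
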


\begin{remark} In particular, the second statement of Theorem \ref{th:book_2} is valid for a sequence $\{\omega^*_N\}_N$ of minimizers for the discrete problem.
\end{remark}

In the next sections we will prove that the kernel $K(x,y) = G(x,y)$, where $G$ is the Green function, is strictly conditionally positive definite (see Proposition \ref{prop:g_posdef}). One might be tempted to use the classical eigenfunction expansion for the Green function
\[
G(x,y) = \sum_{i\geq 1}\frac{\psi_i(x)\psi_i(y)}{\lambda_i},
\] where the $\psi_i$ are some orthonormal basis of eigenfunctions for the Lapace operator and the $\lambda_i$ are the corresponding eigenvalues. Then one would simply write
\[
\int_{x,y\in\M}G(x,y)d\nu(x,y) \stackrel{(*)}{=} \sum_{i\geq 1}\int_{x,y\in\M}\frac{\psi_i(x)\psi_i(y)}{\lambda_i}d\nu(x,y) = \sum_{i\geq 1}\frac{\left(\int_{x\in \M}\psi_i(x)d\nu(x)\right)^2}{\lambda_i}\geq 0.
\] However, the interchange of the sum and the integral in (*) does not directly follow from the Dominated Convergence Theorem and must be carefully justified. We did not find this task to be easy for a general Riemannian manifold and a general signed measure, so we will then use an alternative argument.

\subsection{Mollifiers in Riemannian manifolds} In $\R^n$ we can define the smooth function
\[
\varphi(u) = \left\{
\begin{array}{rl}
C_ne^{-\frac{1}{1-\|u\|^2}} & \text{ if }\|u\| \leq 1,\\
0 & \text{ if }\|u\| > 1,
\end{array}\right.
\] where $C_n$ is some constant making $\int_{\R^n}\varphi = 1$. For every $\eps > 0$, the classical mollifiers are $\varphi_\eps(u) = \eps^{-n}\varphi(u/\eps)$. Each $\varphi_\eps$ is smooth, non--negative, supported on the Euclidean ball $\mathbb{B}(0,\eps)$ and $\int_{\R^n}\varphi_\eps = 1$. Moreover,
\[
\varphi_\eps \stackrel{*}{\rightharpoonup} \delta_0.
\] That is, for every continuous function $f:\R^n \ra \R$ we have that
\[
\lim_{\eps \ra 0}\int_{u\in\R^n}\varphi_\eps(u)f(u)du = f(0).
\] In a Riemannian manifold, we have a sequence of functions playing a similar role. Define
\[
H_\eps(x,y) = \frac{\varphi_\eps(\exp_x^{-1}y)}{\omega_x(y)}, \qquad 0 < \eps < \inj(x).
\] We have to impose $\eps < \inj(x)$ because if $d(x,y) = \inj(x)$, then $\omega_x(y) = 0$ and also $\exp_x^{-1}y$ might not be well--defined. If $\M$ is compact, then $0 < \inj(\M) \leq \inj(x)$ for every point $x\in \M$, and hence we are allowed to just impose $0 < \eps < \inj(\M)$. This will be the case here, since all our manifolds are compact.

\begin{proposition}\label{prop:mollifiers} The functions $H_\eps(x,y)$ have the following properties:
\begin{enumerate}
\item $H_\eps(x,y) \geq 0$.
\item $\text{supp}\,H_\eps = \{(x,y)\in \M \times \M : d(x,y) \leq \eps\}$.
\item $H(x,y) = H(y,x)$.
\item For each $x\in \M$, $\int_{y\in \M}H_\eps(x,y)\dvol(y) = 1$.
\item For every finite signed Borel measure $\nu$ on $\M$ the function
\begin{equation}\label{eq:nu_eps}
\nu_\eps(x) = \int_{y\in \M}H_\eps(x,y)d\nu(y)
\end{equation} is smooth.
\item For every continuous function $f:\M \ra \R$ and for every $x\in \M$,
\[
\lim_{\eps \ra 0}\int_{y\in\M}f(y)H_\eps(x,y)\dvol(y) = f(x).
\]
\end{enumerate}
\end{proposition}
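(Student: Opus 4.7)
The uniform strategy for all six items is to pull back computations via the exponential map at $x$. On $B(x,\inj(\M))$, $\exp_x : T_x\M \to \M$ is a diffeomorphism onto its image and the volume form transforms as $\exp_x^*\dvol = \omega_x\, dv$, where $dv$ denotes Lebesgue measure on $T_x\M \cong \R^n$. Substituting $y = \exp_x(v)$ therefore gives
\[
H_\eps(x,\exp_x(v))\,\omega_x(\exp_x(v))\,dv = \varphi_\eps(v)\,dv,
\]
so every integral against $\dvol(y)$ collapses to an integral of $\varphi_\eps$ against Lebesgue measure on $\mathbb{B}(0,\eps) \subset T_x\M$. From this identity, (1) is immediate since $\varphi_\eps \geq 0$ and $\omega_x > 0$ on $B(x,\inj(x))$ by Proposition \ref{prop:density}(3); (2) follows from $\mathrm{supp}\,\varphi_\eps = \overline{\mathbb{B}(0,\eps)}$ together with $\|\exp_x^{-1}y\| = d(x,y)$; (4) becomes $\int_{\mathbb{B}(0,\eps)}\varphi_\eps(v)\,dv = 1$; and (6) reduces to the classical Euclidean mollifier statement applied to the continuous function $v \mapsto f(\exp_x(v))$ at the origin, whose value is $f(x)$.

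For symmetry (3), I would use that $\varphi_\eps$ is radial, so it depends only on $\|\cdot\|$. Since $\eps < \inj(\M)$ guarantees a unique minimizing geodesic between $x$ and $y$, we have $\|\exp_x^{-1}y\| = d(x,y) = \|\exp_y^{-1}x\|$, and therefore $\varphi_\eps(\exp_x^{-1}y) = \varphi_\eps(\exp_y^{-1}x)$. Combined with $\omega_x(y) = \omega_y(x)$ from Proposition \ref{prop:density}(5), this yields $H_\eps(x,y) = H_\eps(y,x)$.

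The only item that needs more care is (5), and my approach is to split it into (a) showing $H_\eps$ is smooth on $\M \times \M$ and (b) differentiating under the integral sign. On the open set $\{(x,y) : d(x,y) < \inj(\M)\}$ smoothness of $H_\eps$ is immediate from the smoothness of $\exp_x^{-1}$ and $\omega_x$ together with $\omega_x > 0$ there. The delicate point, which I regard as the main obstacle, is smoothness across the boundary $\{d(x,y) = \eps\}$ where we glue with the zero extension valid for $d(x,y) > \eps$ (by (2)). This works precisely because the prototype $\varphi$ vanishes to infinite order at $\|u\| = 1$; hence $\varphi_\eps$ vanishes to infinite order at $\|v\| = \eps$, and all derivatives of $H_\eps$ match to zero at the boundary, yielding $H_\eps \in C^\infty(\M \times \M)$.

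Once $H_\eps$ is smooth on the compact manifold $\M \times \M$, every $x$-derivative (in any local chart) of $H_\eps(x,y)$ is continuous on a compact set and therefore uniformly bounded in $(x,y)$. Since $\nu$ is a finite signed Borel measure, the Dominated Convergence Theorem justifies differentiation under the integral in
\[
\nu_\eps(x) = \int_{y\in\M} H_\eps(x,y)\,d\nu(y)
\]
to arbitrary order, showing that $\nu_\eps \in C^\infty(\M)$.
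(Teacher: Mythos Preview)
Your proposal is correct and follows essentially the same route as the paper: items (1)--(4) and (6) are handled identically via the pullback $\exp_x^*\dvol = \omega_x\,dv$ and the symmetry $\omega_x(y)=\omega_y(x)$, and item (5) is obtained by differentiating under the integral sign against the finite signed measure $\nu$. The only difference is cosmetic: for (5) the paper simply asserts that $H_\eps$ is smooth and then invokes a differentiation-under-the-integral result (Bogachev) inductively, whereas you take the extra step of explaining \emph{why} $H_\eps \in C^\infty(\M\times\M)$ across $\{d(x,y)=\eps\}$ before applying dominated convergence---a welcome clarification, though note that it can be phrased more simply by observing that $\varphi_\eps$ is already $C^\infty$ on all of $\R^n$, so on the open set $\{d(x,y)<\inj(\M)\}$ the formula is smooth and agrees with the zero extension on the overlap $\{\eps<d(x,y)<\inj(\M)\}$.
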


\begin{proof} (1) and (2) are immediate from the definition of $H_\eps$, since from Proposition \ref{prop:density} $\omega_x(y) > 0$ if $d(x,y) < \inj(\M)$.

Note that $\varphi_\eps(u)$ depends only on $\|u\|$ and
\[
\|\exp_x^{-1}y\| = d(x,y) = d(y,x) = \|\exp_y^{-1}x\|,
\] hence $\varphi_\eps(\exp_x^{-1}y) = \varphi_\eps(\exp_y^{-1}x)$ and $\omega_x(y) = \omega_y(x)$ from Proposition \ref{prop:density}, so (3) follows.

For (4) let us compute in normal coordinates around $x$:
\[
\int_{y\in \M}H_\eps(x,y)\dvol(y) = \int_{\mathbb{B}(0,\eps)}\frac{\varphi_\eps(u)}{\sqrt{\det{g_{ij}(u)}}} \sqrt{\det{g_{ij}(u)}} du = \int_{u\in \mathbb{B}(0,\eps)}\varphi_\eps(u)du = 1.
\]

We will prove (5) by induction. Pick some coordinate system $(U,(x^1, ..., x^n))$ and let $x\in U$ be any point. For each index $1 \leq i \leq n$, let $\gamma(t)$ be an integral curve of the vector field $\partial_i$ with $\gamma(0) = x$, and consider the function
\[
g(t) = \int_{y\in \M}H_\eps(\gamma(t),y)d\nu(y).
\] For every $t \in (a,b)$ the function $t\mapsto H_\eps(\gamma(t),y)$ is smooth because $H_\eps$ is smooth. Writing the Jordan--Hahn decomposition of $\nu$ as $\nu = \nu^+-\nu^-$, where $\nu^+$ and $\nu^-$ are (non--negative) finite measures, and applying \cite[Corollary 2.8.7]{bogachev_i}, $g$ is differentiable and
\[
g'(t) = \int_{y\in \M}\frac{d}{dt}H_\eps(\gamma(t),y)d\nu(y).
\] Hence,
\[
\partial_i\nu_\eps(x) = \int_{y\in \M}\partial_iH_\eps(x,y)\,d\nu(y),
\] and thus $\nu_\eps$ is $C^1$. Now assume that $\nu_\eps$ is $C^k$ and that
\[
\partial_{i_1} \cdots \partial_{i_k} \nu_\eps(x) = \int \partial_{i_1} \cdots \partial_{i_k} H_\eps(x,y)d\nu(x)
\] for some indexes $1 \leq i_1, ..., i_k \leq n$. If $1 \leq i \leq n$ is any index, since $x\mapsto \partial_{i_1} \cdots \partial_{i_k} H_\eps(x,y)$ is again a smooth function, then by the base case $\nu_\eps$ is $C^{k+1}$ and
\[
\partial_i \partial_{i_1} \cdots \partial_{i_k} \nu_\eps(x) = \int_{y\in \M}\partial_i\partial_{i_1}\cdots \partial_{i_k}H_\eps(x,y)d\nu(y).
\] Hence $\nu_\eps$ is $C^\infty$.

Finally, we prove (6). Let $f$ be continuous and let $x\in \M$ be any point. If we take normal coordinates in $B(x,\eps)$ for $0 < \eps < \inj(\M)$, we get
\[
f_\eps(x) = \int_{y\in \M}H_\eps(x,y)f(y)\dvol(y) = \int_{u\in \mathbb{B}(0,\eps)}\varphi_\eps(u)f(\exp_xu)du \ra f(\exp_x0) = f(x).
\]
\end{proof}

\subsection{Positive definiteness of the Green function} With the Riemannian mollifiers in hand, now we are able to prove that the Green function is a strictly conditionally positive definite kernel.

\begin{remark} In what follows we will make use of Fubini's Theorem and Lebesgue's Dominated Convergence Theorem for signed finite measures $\nu$. Note that every continuous function $f:\M \ra \R$ is $\nu$-integrable, since
\[
|\nu|(|f|) = \int |f|d\nu^++\int |f|d\nu^-,
\] where $\nu^+$ and $\nu^-$ are both finite (non--negative) measures and $\M$ is compact. The same holds for $\nu \otimes \dvol = \nu^+\otimes \dvol - \nu^-\otimes \dvol$ and continuous $f:\M \times \M \ra \R$.
\end{remark}

\begin{lemma}\label{lem:aprox_nu} Let $\M$ be a compact manifold and let $\nu$ be a signed finite Borel measure such that $\nu(\M) = 0$. The sequence $\{\nu_\eps\}_\eps$ given in \eqref{eq:nu_eps} satisfies:
\begin{enumerate}
\item $\nu_\eps(\M) = 0$, i.e., $\int_{x\in \M}\nu_\eps(x)\dvol(x) = 0$.
\item $\nu_\eps \stackrel{*}{\rightharpoonup} \nu$ as $\eps \ra 0$.
\end{enumerate}
\end{lemma}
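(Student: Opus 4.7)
The plan is to handle the two claims separately, reducing each to an application of Fubini's theorem combined with a property from Proposition \ref{prop:mollifiers}. Throughout I would exploit the symmetry $H_\eps(x,y) = H_\eps(y,x)$ from (3) and the normalization $\int_\M H_\eps(x,y)\,\dvol(y) = 1$ from (4), together with the Jordan--Hahn decomposition $\nu = \nu^+ - \nu^-$ into finite nonnegative measures.

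For (1), I would swap the order of integration to write
\[
\int_\M \nu_\eps(x)\,\dvol(x) = \int_\M \int_\M H_\eps(x,y)\,d\nu(y)\,\dvol(x) = \int_\M \left(\int_\M H_\eps(y,x)\,\dvol(x)\right)d\nu(y),
\]
using symmetry on the inner integral. By (4) the inner integral equals $1$ for every $y$, so the whole expression collapses to $\nu(\M) = 0$. Fubini is justified because $H_\eps$ is continuous and hence bounded on the compact product $\M \times \M$, and $|\nu|$ is finite.

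For (2), I would unpack weak$^*$ convergence: given a continuous test function $f:\M \to \R$, show that $\int_\M f(x)\nu_\eps(x)\,\dvol(x) \to \int_\M f\,d\nu$. Another application of Fubini plus symmetry rewrites the left-hand side as
\[
\int_\M \left(\int_\M H_\eps(y,x)f(x)\,\dvol(x)\right)d\nu(y) = \int_\M f_\eps(y)\,d\nu(y),
\]
where $f_\eps$ is the mollification of $f$. Proposition \ref{prop:mollifiers}(6) yields pointwise convergence $f_\eps(y) \to f(y)$, while (1) and (4) give the uniform bound $|f_\eps(y)| \leq \|f\|_\infty$. Applying Dominated Convergence to $\nu^+$ and $\nu^-$ separately then produces the desired limit.

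There is no substantive obstacle here: the lemma is essentially a bookkeeping check that the standard Euclidean mollifier arguments transfer to the Riemannian setting. The only items requiring care are verifying the hypotheses of Fubini and DCT for a signed measure, but these are immediate after splitting into Jordan--Hahn components and using compactness of $\M$ to bound all integrands uniformly.
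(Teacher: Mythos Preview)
Your proposal is correct and follows essentially the same approach as the paper: Fubini plus property (4) for part (1), and Fubini followed by pointwise convergence (6), the uniform bound from (1) and (4), and Dominated Convergence for part (2). Your explicit invocation of the symmetry (3) before applying (4), and the remark about handling the signed measure via its Jordan--Hahn decomposition, are minor clarifications of steps the paper leaves implicit.
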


\begin{proof} For each $0 < \eps < \inj(\M)$, $\nu_\eps(x)$ is smooth as seen in Proposition \ref{prop:mollifiers}. Then
\begin{align*}
\int_{x\in \M}\nu_\eps(x)\dvol(x) &= \int_{x\in \M}\int_{y\in \M}H_\eps(x,y)d\nu(y)\dvol(x)\\
&= \int_{y\in \M}\left(\int_{x\in \M}H_\eps(x,y)\dvol(x)\right)d\nu(y) \quad \text{(by Fubini's Theorem)}\\
&= \int_{y\in \M} d\nu(y) \quad \text{(by (4) in Proposition \ref{prop:mollifiers})}\\
&= 0.
\end{align*} This proves (1). Now let $f:\M \ra \R$ be a continuous function. Then
\begin{align*}
\int_{y\in \M}f(y)\nu_\eps(y)\dvol(y) &= \int_{y\in \M}f(y)\int_{x\in \M}H_\eps(x,y)d\nu(x)\dvol(y)\\
&= \int_{x\in \M}\left(\int_{y\in \M}f(y)H_\eps(x,y)\dvol(y)\right)d\nu(x) \quad \text{(by Fubini's Theorem)}
\end{align*} By (6) in Proposition \ref{prop:mollifiers},
\[
\lim_{\eps\ra 0}\int_{y\in \M}f(y)H_\eps(x,y)\dvol(y) = f(x).
\] Also, since $f$ is continuous and $\M$ is compact, there is some constant $C$ such that
\[
\left|\int_{y\in \M}f(y)H_\eps(x,y)\dvol(y)\right| \leq C\int_{y\in \M}|H_\eps(x,y)|\dvol(y) = C,
\] (from (1) and (4) in Proposition \ref{prop:mollifiers}). The constant $C$ is a $\nu$--integrable function and, by the Dominated Convergence Theorem,
\[
\lim_{\eps\ra 0}\int_{y\in \M}f(y)\nu_\eps(y)\dvol(y) = \int_{x\in \M}f(x)d\nu(x),
\] proving (2).
\end{proof}

\begin{lemma}\label{lem:lemita1} There is a constant $C > 0$ such that, for every $0 < \eps < \frac{\inj(\M)}{4}$, and for every pair of points $y,z\in \M$ such that $d(y,z) < 2\eps$, we have
\[
\int_{x\in \M}H_\eps(x,z)\log d(x,y)^{-1}\dvol(x) \leq C\log d(z,y)^{-1} \quad \text{ if }n = 2,
\]
\[
\int_{x\in \M}\frac{H_\eps(x,z)}{d(x,y)^{n-2}}\dvol(x) \leq \frac{C}{d(z,y)^{n-2}} \quad \text{ if }n > 2.
\]
\end{lemma}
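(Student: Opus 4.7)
The plan is to reduce the estimate to two elementary ingredients: a uniform pointwise bound on $H_\eps$, and a standard upper bound on a singular integral over a geodesic ball.

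First, I would bound $H_\eps$ pointwise. By construction $\varphi_\eps(u) \leq \|\varphi\|_\infty \eps^{-n}$, and Proposition \ref{prop:density}(2)--(3) together with compactness of $\M$ and continuity of $(x,z) \mapsto \omega_z(x)$ yield a uniform lower bound $\omega_z(x) \geq m > 0$ on the compact set $\{(x,z) : d(x,z) \leq \inj(\M)/4\}$. Hence there is a universal $A > 0$ with
\[
H_\eps(x,z) \leq A\eps^{-n}, \quad \text{for all } 0 < \eps < \inj(\M)/4, \; d(x,z) \leq \eps.
\]
Next, since $H_\eps(\cdot,z)$ is supported in $B(z,\eps)$ and $d(z,y) < 2\eps$, the triangle inequality gives $B(z,\eps) \subseteq B(y,3\eps)$, and $3\eps < \inj(\M)$ lets me work in normal coordinates around $y$, where $\dvol = \omega_y(r\theta)\,r^{n-1}\,dr\,d\theta$ with $\omega_y$ uniformly bounded on this ball.

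With these in hand, the singular integrals are routine to estimate. For $n > 2$,
\[
\int_{B(y,3\eps)}\frac{\dvol(x)}{d(x,y)^{n-2}} \leq M \int_0^{3\eps} r\,dr = C_1\eps^2,
\]
and for $n = 2$,
\[
\int_{B(y,3\eps)} \log d(x,y)^{-1} \dvol(x) \leq M \int_0^{3\eps} r\bigl(1 + |\log r|\bigr)\,dr \leq C_1 \eps^2 \bigl(1 + |\log \eps|\bigr).
\]
Multiplying by the pointwise bound $A\eps^{-n}$ on $H_\eps$, the full convolution is bounded by $C_2\eps^{-(n-2)}$ for $n>2$ and by $C_2(1 + |\log\eps|)$ for $n = 2$.

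It remains to absorb $\eps$ into $d(z,y)$ using $d(z,y) < 2\eps$. For $n > 2$ this is immediate: $\eps^{-(n-2)} \leq 2^{n-2} d(z,y)^{-(n-2)}$, giving the claim with $C = 2^{n-2}C_2$. For $n = 2$, I use $|\log\eps| = \log(1/\eps) \leq \log(2/d(z,y)) = \log d(z,y)^{-1} + \log 2$.

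The main obstacle is cosmetic rather than substantive, and lives in the $n=2$ step: one needs $\log d(z,y)^{-1}$ to be positive and large enough to dominate the additive constant on the right, which may fail if $\eps$ (and hence possibly $d(z,y)$) is not small. I would handle this by splitting on the size of $\eps$: on the bounded range $\eps \in [\eps_0, \inj(\M)/4]$ for some fixed $\eps_0 > 0$, the left-hand side is uniformly bounded above by a constant and $d(z,y)$ ranges in a compact interval, so the inequality holds trivially after enlarging $C$; on the range $\eps < \eps_0$ chosen so that $2\eps_0 < 1/e$, we automatically have $\log d(z,y)^{-1} > 1$ and the argument above closes. Everywhere else the manipulations are standard and the constants depend only on $n$, $\|\varphi\|_\infty$, and bounds for $\omega$ on $\M$.
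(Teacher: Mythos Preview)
Your argument for $n>2$ is essentially identical to the paper's: both bound $H_\eps$ pointwise by a constant times $\eps^{-n}$ using a uniform positive lower bound on the volume density, enlarge $B(z,\eps)$ to $B(y,3\eps)$, evaluate the singular integral in normal coordinates about $y$ to get a bound of order $\eps^{2}$, and then convert $\eps^{-(n-2)}$ into $d(z,y)^{-(n-2)}$ via $d(z,y)<2\eps$. The paper also proves only the case $n>2$ in detail and declares $n=2$ ``similar''.

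One small correction to your $n=2$ discussion: the assertion that ``$d(z,y)$ ranges in a compact interval'' on the regime $\eps\in[\eps_0,\inj(\M)/4]$ is not right, since $d(z,y)$ can still be arbitrarily small there; more seriously, if $\inj(\M)\geq 2$ then $d(z,y)$ can exceed $1$, making $\log d(z,y)^{-1}$ negative while the left-hand side may be positive, so no enlargement of $C$ saves the literal inequality. This is a wrinkle in the lemma's formulation rather than in your strategy; it disappears after rescaling the metric so that $\mathrm{diam}(\M)<1$, or by allowing an additive constant on the right (which is all that the downstream application in Lemma~\ref{lem:Gstar} actually needs).
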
   

\begin{proof} We prove the case $n > 2$ (the case $n = 2$ is similar). Let $0 < \eps < \frac{\inj(\M)}{4}$, and let $y,z\in \M$ be a a pair of points with $d(y,z) < 2\eps$. The function
\[
(x,z) \mapsto \omega_x(z)
\] is continuous and strictly positive if $d(x,z) < \inj(\M)$, hence it is continuous and strictly positive on the compact set
\[
\left\{(x,z)\in \M \times \M : d(x,z) \leq \frac{3}{4}\inj(\M)\right\}.
\] Therefore, there exist constants $0 < k \leq K$, not depending on $\eps$, such that for every $p,q\in \M$ with $d(p,q) \leq 3\eps$, we have $0 < k \leq \omega_p(q) \leq K$. Since $\varphi_\eps(u)$ reaches a maximum at $u = 0$,
\begin{equation}\label{eq:h1}
H_\eps(x,z) = \frac{\varphi_\eps(\exp_z^{-1}x)}{\omega_z(x)} \leq \frac{1}{k}\varphi_\eps(0) = \frac{C_n}{ek\eps^n}.
\end{equation} (Recall that $C_n$ is the constant making $\int_{\R^n}\varphi = 1$). Then,
\[
\int_{x\in \M}\frac{H_\eps(x,z)}{d(x,y)^{n-2}}\dvol(x) \leq \frac{C_n}{ek\eps^n}\int_{x\in B(z,\eps)}\frac{1}{d(x,y)^{n-2}}\dvol(x).
\] Since $\eps < \frac{\inj(\M)}{3}$, there are normal coordinates around $y$ defined on $B(y,3\eps)$. Since $d(y,z) < 2\eps$, if $x\in B(z,\eps)$ then by the triangle inequality $d(x,y) < 3\eps$, so $B(y,3\eps)\supset B(z,\eps)$. Now, taking normal coordinates on $B(y,3\eps)$,
\begin{align*}
\int_{x\in B(z,\eps)}\frac{1}{d(x,y)^{n-2}}\dvol(x) &\leq \int_{x\in B(y,3\eps)}\frac{1}{d(x,y)^{n-2}}\dvol(x)\\
&= \int_{u\in \mathbb{B}(3\eps)}\frac{\sqrt{\det{g_{ij}(u)}}}{\|u\|^{n-2}}du\\
&= \int_{u\in \mathbb{B}(3\eps)}\frac{\omega_y(\exp_xu)}{\|u\|^{n-2}}du\\
&\leq K\int_{u\in \mathbb{B}(3\eps)}\frac{1}{\|u\|^{n-2}}du\\
&= K\int_0^{3\eps}\int_{\theta\in \mathbb{S}^{n-1}(r)}\frac{1}{r^{n-2}}d\theta dr\\
&= K\vol(\S^{n-1})\int_0^{3\eps} rdr\\
&= \frac{9K\vol(\S^{n-1})\eps^2}{2}.
\end{align*} 
We then conclude:
\[
\int_{x\in \M}\frac{H_\eps(x,z)}{d(x,y)^{n-2}}\dvol(x) \leq \frac{9C_nK\vol(\S^{n-1})}{2ek\eps^{n-2}} \stackrel{d(y,z) < 2\eps}{\leq} \frac{2^{n-3}\cdot 9 C_nK\vol(\S^{n-1})}{ekd(y,z)^{n-2}}.
\] The result follows by taking
\[
C = \frac{2^{n-3}\cdot 9 C_nK\vol(\S^{n-1})}{ek}.
\]
\end{proof}

\begin{lemma}\label{lem:lemita2} Let $0 < \eps < \inj(\M)$ and let $y,z,w \in \M$ be points such that $d(y,w) \leq \eps$ and $d(y,z) > 2\eps$. Then
\[
\int_{x\in \M}H_\eps(x,z)\log(x,y)^{-1}\dvol(x) \leq \log d(y,w)^{-1} \quad \text{ if }n = 2,
\]
\[
\int_{x\in \M}\frac{H_\eps(x,z)}{d(x,y)^{n-2}}\dvol(x) \leq \frac{1}{d(y,w)^{n-2}} \quad \text{ if }n>2.
\]
\end{lemma}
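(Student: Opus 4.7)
The plan is to exploit the support condition on $H_\eps(\cdot,z)$ together with the triangle inequality, so that the integrand $1/d(x,y)^{n-2}$ (respectively $\log d(x,y)^{-1}$) is pointwise dominated by the constant value $1/d(y,w)^{n-2}$ (respectively $\log d(y,w)^{-1}$) throughout the support of $H_\eps(\cdot,z)$. This is in sharp contrast with Lemma~\ref{lem:lemita1}, where $y$ and $z$ were close, forcing a more delicate computation in normal coordinates.

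First, by property (2) of Proposition~\ref{prop:mollifiers}, $H_\eps(x,z) = 0$ whenever $d(x,z) > \eps$, so on the support of $x\mapsto H_\eps(x,z)$ we have $d(x,z)\le \eps$. Combined with the hypothesis $d(y,z) > 2\eps$, the triangle inequality yields
\[
d(x,y) \;\geq\; d(y,z) - d(x,z) \;>\; 2\eps - \eps \;=\; \eps \;\geq\; d(y,w).
\]
Since both $r\mapsto r^{2-n}$ (for $n>2$) and $r\mapsto \log r^{-1}$ (for $n=2$) are strictly decreasing on $(0,\infty)$, this pointwise geometric inequality transfers to a pointwise inequality between the integrands on the support of $H_\eps(\cdot,z)$.

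Then, multiplying by the non-negative factor $H_\eps(x,z)$ and integrating against $\dvol$, the right-hand side factors out as a constant, so that by property (4) of Proposition~\ref{prop:mollifiers},
\[
\int_{\M} H_\eps(x,z)\,\frac{1}{d(x,y)^{n-2}}\,\dvol(x) \;\leq\; \frac{1}{d(y,w)^{n-2}}\int_{\M} H_\eps(x,z)\,\dvol(x) \;=\; \frac{1}{d(y,w)^{n-2}},
\]
and the case $n=2$ is identical after replacing the kernel by $\log d(\cdot,y)^{-1}$.

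There is no genuine obstacle here: the whole point is that in the far-regime $d(y,z) > 2\eps$ the singularity of the Green kernel is safely separated from the support of the mollifier, so a crude monotonicity argument suffices and no chart computation is needed. The only mild care required is to check that the monotonicity is applied to positive quantities (which holds because $d(x,y) > \eps > 0$ on the support, so the logarithm and the reciprocal power are well-defined and finite), while the degenerate case $y=w$ makes the right-hand side $+\infty$ and the inequality trivial.
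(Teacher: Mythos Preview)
Your proof is correct and follows essentially the same approach as the paper: both use the triangle inequality to show that $d(x,y)\ge d(y,w)$ on the support of $H_\eps(\cdot,z)$, then apply monotonicity of the kernel and the normalization $\int H_\eps=1$. You spell out the chain $d(x,y)\ge d(y,z)-d(x,z)>2\eps-\eps=\eps\ge d(y,w)$ more explicitly than the paper does, but the argument is the same.
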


\begin{proof} We prove the case $n > 2$ (the case $n = 2$ is similar). From the triangle inequality, for any $x\in B(z,\eps)$ we have that $d(x,y) \geq d(y,w)$. Hence
\[
\int_{x\in \M}\frac{H_\eps(x,z)}{d(x,y)^{n-2}}\dvol(x) \leq \int_{x\in \M}\frac{H_\eps(x,z)}{d(y,w)^{n-2}}\dvol(x) = \frac{1}{d(y,w)^{n-2}},
\] since $\int H_\eps = 1$.
\end{proof}

\begin{lemma}\label{lem:lemita3} There exists a positive constant $K$ such that, for every $0 < \eps < \frac{\inj(\M)}{4}$ and for every $z,w\in \M$ with $z\neq w$,
\[
\int_{x,y\in \M}H_\eps(x,z)H_\eps(y,w)\log d(x,y)^{-1}\dvol(x)\dvol(y) \leq K\log d(z,w)^{-1} \quad \text{ if }n = 2,
\]
\[
\int_{x,y\in \M}\frac{H_\eps(x,z)H_\eps(y,w)}{d(x,y)^{n-2}}\dvol(x)\dvol(y) \leq \frac{K}{d(z,w)^{n-2}} \text{ if }n > 2.
\]
\end{lemma}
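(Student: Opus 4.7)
The plan is to apply Fubini's Theorem, bound the inner integral over $x$ using Lemmas \ref{lem:lemita1} and \ref{lem:lemita2}, and then estimate the remaining integral in $y$, with the case analysis dictated by the relative sizes of $d(z,w)$ and $\eps$. I treat only $n > 2$; the logarithmic case $n = 2$ is completely analogous, with $d(x,y)^{2-n}$ replaced by $\log d(x,y)^{-1}$ throughout.

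If $d(z,w) \geq 4\eps$, the supports of $H_\eps(\cdot,z)$ and $H_\eps(\cdot,w)$ are well separated: for any $x \in B(z,\eps)$ and $y \in B(w,\eps)$ the triangle inequality gives $d(x,y) \geq d(z,w) - 2\eps \geq d(z,w)/2$, so $d(x,y)^{2-n} \leq 2^{n-2}d(z,w)^{2-n}$ on the support of the integrand. Using $\int H_\eps(\cdot,z)\dvol = \int H_\eps(\cdot,w)\dvol = 1$ from Proposition \ref{prop:mollifiers}(4), the double integral is at most $2^{n-2}/d(z,w)^{n-2}$, as desired.

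If instead $d(z,w) < 4\eps$, write the double integral by Fubini as $\int_y H_\eps(y,w)I(y)\dvol(y)$ with $I(y) := \int_x H_\eps(x,z)d(x,y)^{2-n}\dvol(x)$, and split the outer domain into $A = \{y : d(y,z) < 2\eps\}$ and $B = \{y : d(y,z) \geq 2\eps\}$. Lemma \ref{lem:lemita1} bounds $I(y) \leq C\,d(y,z)^{2-n}$ on $A$, while Lemma \ref{lem:lemita2} gives $I(y) \leq d(y,w)^{2-n}$ on $B$ (the hypothesis $d(y,w) \leq \eps$ in Lemma \ref{lem:lemita2} comes for free from the support of $H_\eps(y,w)$). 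For the $A$-piece, a second application of Lemma \ref{lem:lemita1} (swapping the roles of $z$ and $w$ via the symmetry $H_\eps(y,w) = H_\eps(w,y)$) bounds $\int H_\eps(y,w)d(y,z)^{2-n}\dvol(y) \leq C\,d(z,w)^{2-n}$ whenever $d(z,w) < 2\eps$; when $2\eps \leq d(z,w) < 4\eps$, the triangle inequality forces $d(y,z) \geq d(z,w) - \eps \geq d(z,w)/4$ on $A$, so $d(y,z)^{2-n} \leq 4^{n-2}d(z,w)^{2-n}$ and the same form of bound holds. For the $B$-piece, a direct computation in normal coordinates around $w$ reduces the integrand $H_\eps(y,w)d(y,w)^{2-n}\dvol(y)$ to $\varphi_\eps(u)\|u\|^{2-n}du$, and the bounds $\|\varphi_\eps\|_\infty \leq C_n e^{-1}\eps^{-n}$ and $\int_{\|u\|\leq \eps}\|u\|^{2-n}du = \frac{\vol(\S^{n-1})}{2}\eps^2$ give an estimate of order $\eps^{2-n}$, which under $d(z,w) < 4\eps$ is itself bounded by a constant times $d(z,w)^{2-n}$.

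The main obstacle is precisely this ``far'' piece $B$ in the small-$d(z,w)$ regime: Lemma \ref{lem:lemita2} produces the potentially singular quantity $d(y,w)^{2-n}$, and it is a priori unclear that integrating it against $H_\eps(y,w)$ yields anything useful. The rescue is the scaling of $H_\eps$, which behaves essentially as $\eps^{-n}\mathbf{1}_{B(w,\eps)}$; the apparent singularity is exchanged for a factor $\eps^{2-n}$ that is the right size to be absorbed into $d(z,w)^{2-n}$ when $d(z,w) \lesssim \eps$. Taking $K$ to be the maximum of the constants produced in the two regimes completes the proof.
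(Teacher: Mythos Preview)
Your proof is correct, and the near–far decomposition together with the appeal to Lemmas~\ref{lem:lemita1} and~\ref{lem:lemita2} in the near regime is exactly what the paper does. The one genuine difference is your treatment of the far case. You place the threshold at $d(z,w)\geq 4\eps$ and simply observe that the supports are separated, so $d(x,y)\geq d(z,w)-2\eps\geq d(z,w)/2$ on the support of the integrand; the unit mass of the mollifiers then immediately gives the bound $2^{n-2}/d(z,w)^{n-2}$. The paper instead places the threshold at $d(z,w)>2\eps$ and, after the triangle inequality $d(x,y)\geq d(z,w)-d(x,z)-d(y,w)$, carries out an explicit computation in normal coordinates around $z$ and $w$, passes to polar coordinates, rescales, and ends up having to check that the integral
\[
\int_0^1\int_0^1 \frac{x^{n-1}y^{n-1}e^{-\frac{1}{1-x^2}}e^{-\frac{1}{1-y^2}}}{(1-x/2-y/2)^{n-2}}\,dx\,dy
\]
converges. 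Your argument bypasses this entirely; the price you pay is the extra sub-case $2\eps\leq d(z,w)<4\eps$ in the $A$-piece, which you dispatch in one line with the triangle inequality. In short, your far-case argument is strictly more elementary than the paper's, at the cost of a slightly wider near regime that causes no difficulty.
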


\begin{proof} We prove the case $n > 2$ (the case $n = 2$ is similar). Let $0 < \eps < \frac{\inj(\M)}{4}$, and let $z,w\in \M$ with $z \neq w$. From Fubini's Theorem and lemmas \ref{lem:lemita1} and \ref{lem:lemita2}, we have that
\begin{align*}
\int_{x,y\in \M}&\frac{H_\eps(x,z)H_\eps(y,w)}{d(x,y)^{n-2}}\dvol(x,y) = \int_{y\in B(w,\eps)}H_\eps(y,w)\left(\int_{x\in \M}\frac{H_\eps(x,z)}{d(x,y)^{n-2}}\dvol(x)\right)\dvol(y)\\
&= \int_{\{y:d(y,z)\leq 2\eps\}\cap B(w,\eps)}H_\eps(y,w)\left(\int_{x\in \M}\frac{H_\eps(x,z)}{d(x,y)^{n-2}}\dvol(x)\right)\dvol(y)\\
&\quad +\int_{\{y:d(y,z)> 2\eps\}\cap B(w,\eps)}H_\eps(y,w)\left(\int_{x\in \M}\frac{H_\eps(x,z)}{d(x,y)^{n-2}}\dvol(x)\right)\dvol(y)\\
&\leq C\int_{\{y:d(y,z)\leq 2\eps\}}\frac{H_\eps(y,w)}{d(y,z)^{n-2}}\dvol(y)\\
&+\quad\int_{\{y:d(y,z)> 2\eps\}}\frac{H_\eps(y,w)}{d(y,w)^{n-2}}\dvol(y)\\
\end{align*} If $d(z,w) \leq 2\eps$, applying Lemma \ref{lem:lemita1} we get a bound for the first of the integrals above:
\[
C\int_{\{y:d(y,z)\leq 2\eps\}}\frac{H_\eps(y,w)}{d(y,z)^{n-2}}\dvol(y) \leq C\int_{y\in \M}\frac{H_\eps(y,w)}{d(y,z)^{n-2}}\dvol(y) \leq \frac{C^2}{d(z,w)^{n-2}},
\] and for the second one we have that
\begin{align*}
\int_{\{y:d(y,z)> 2\eps\}}\frac{H_\eps(y,w)}{d(y,w)^{n-2}}\dvol(y) &\leq \int_{y\in \M}\frac{H_\eps(y,w)}{d(y,w)^{n-2}}\dvol(y)\\
&= \int_{u\in \mathbb{B}(\eps)}\frac{\varphi_\eps(u)}{\|u\|^{n-2}}du\\
&\leq \frac{C_n}{e\eps^n}\int_{u\in \mathbb{B}(\eps)}\frac{1}{\|u\|^{n-2}}du\\
&= \frac{C_n\vol(\S^{n-1})}{2e\eps^{n-2}}\\
&\leq \frac{2^{n-3}C_n\vol(\S^{n-1})}{ed(z,w)^{n-2}}.
\end{align*} Therefore, if $d(z,w) \leq 2\eps$, then
\[
\int_{x,y\in \M}\frac{H_\eps(x,z)H_\eps(y,w)}{d(x,y)^{n-2}}\dvol(x)\dvol(y) \leq \frac{C^2+2^{n-3}C_n\vol(\S^{n-1})e^{-1}}{d(z,w)^{n-2}}.
\] Now assume that $d(z,w) > 2\eps$. From the triangle inequality, if $H_\eps(x,z)H_\eps(y,w)\neq0$,
\[
d(x,y) \geq d(z,w)-d(z,x)-d(y,w) > 0,
\] hence
\[
\int_{x,y\in \M}\frac{H_\eps(x,z)H_\eps(y,w)}{d(x,y)^{n-2}}\dvol(x,y) \leq \int_{x,y\in \M}\frac{H_\eps(x,z)H_\eps(y,w)}{(d(z,w)-d(z,x)-d(y,w))^{n-2}}\dvol(x,y).
\] Let us denote $t = d(z,w)$. Taking normal coordinates around $z$ and $w$, this last integral equals
\[
\int_{u,v\in \mathbb{B}(\eps)}\frac{\varphi_\eps(u)\varphi_\eps(v)}{(t-\|u\|-\|v\|)^{n-2}}dudv = \frac{C_n^2}{\eps^{2n}}\int_{u,v\in\mathbb{B}(\eps)}\frac{e^{-\frac{1}{1-\frac{\|u\|^2}{\eps^2}}}e^{-\frac{1}{1-\frac{\|v\|^2}{\eps^2}}}}{(t-\|u\|-\|v\|)^{n-2}}dudv
\] Taking polar coordinates, this equals
\[
\frac{C_n^2\vol(\S^{n-1})^2}{\eps^{2n}}\int_0^\eps\int_0^\eps \frac{r^{n-1}s^{n-1}e^{-\frac{1}{1-\frac{r^2}{\eps^2}}}e^{-\frac{1}{1-\frac{s^2}{\eps^2}}}}{(t-r-s)^{n-2}}drds
\] Changing variables $x = r/\eps$, $y = s/\eps$, that is
\begin{align*}
\frac{C_n^2\vol(\S^{n-1})^2}{\eps^{2n}}\int_0^1\int_0^1 \frac{(\eps x)^{n-1}(\eps y)^{n-1}e^{-\frac{1}{1-x^2}}e^{-\frac{1}{1-y^2}}}{(t-\eps x-\eps y)^{n-2}}&\eps^2dxdy\\
= C_n^2\vol(\S^{n-1})^2\int_0^1\int_0^1 \frac{x^{n-1}y^{n-1}e^{-\frac{1}{1-x^2}}e^{-\frac{1}{1-y^2}}}{(t-\eps x -\eps y)^{n-2}}dxdy\\
 \leq C_n^2\vol(\S^{n-1})^2 \int_0^1 \int_0^1 \frac{x^{n-1}y^{n-1}e^{-\frac{1}{1-x^2}}e^{-\frac{1}{1-y^2}}}{\left(t-\frac{t}{2}x-\frac{t}{2}y\right)^{n-2}}dxdy \qquad (t > 2\eps)\\
\end{align*} and this last integral equals $\frac{\tilde{K}}{t^{n-2}}$ with
\[
\tilde{K} = C_n^2\vol(\S^{n-1})^2 \int_0^1 \int_0^1 \frac{x^{n-1}y^{n-1}e^{-\frac{1}{1-x^2}}e^{-\frac{1}{1-y^2}}}{(1-x/2-y/2)^{n-2}}dxdy.
\] Let us see that the integral in the right hand side converges. We have that
\begin{align*}
\int_0^1 \int_0^1 \frac{x^{n-1}y^{n-1}e^{-\frac{1}{1-x^2}}e^{-\frac{1}{1-y^2}}}{(1-x/2-y/2)^{n-2}}dxdy &\leq \int_0^1\int_0^1 \frac{e^{-\frac{1}{1-x^2}}e^{-\frac{1}{1-y^2}}}{(1-x/2-1/2)^{n-2}}dxdy\\
&= 2^{n-2}\int_0^1 e^{-\frac{1}{1-y^2}}dy\int_0^1 \frac{e^{-\frac{1}{1-x^2}}}{(1-x)^{n-2}}dx\\
&= \frac{2^{n-3}}{C_1}\int_0^1 \frac{e^{-\frac{1}{1-x^2}}}{(1-x)^{n-2}}dx
\end{align*} This integral converges because
\[
\lim_{x\ra 1^-}\frac{e^{-\frac{1}{1-x^2}}}{(1-x)^{n-2}} = 0.
\]

Finally, taking
\[
K = \max\{\tilde{K},C^2+2^{n-3}C_ne^{-1}\vol(\S^{n-1})\}
\] the result follows.
\end{proof}

\begin{lemma}\label{lem:lemita4} There exists constants $C_1, C_2 > 0$ such that, for every $x \neq y$,
\[
\log d(x,y)^{-1} \leq C_1G(x,y)+C_2 \qquad \text{ if }n=2,
\]
\[
\frac{1}{d(x,y)^{n-2}} \leq C_1G(x,y)+C_2 \qquad \text{ if }n > 2.
\]
\end{lemma}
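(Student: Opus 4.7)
I would split the estimate at a threshold $\delta \in (0, \inj(\M)/2)$. On the region $\{d(x,y) \geq \delta\}$ both inequalities are essentially trivial: the left-hand side is bounded above by $\delta^{2-n}$ (or $\log\delta^{-1}$ when $n=2$), while Theorem \ref{thm:green_f}(3) gives $G(x,y) \geq A$. Hence for any $C_1 > 0$ the inequality holds on this region as soon as $C_2 \geq \delta^{2-n} - C_1 A$ (respectively $\log\delta^{-1} - C_1 A$). The substance of the lemma is therefore the behavior of $G$ near the diagonal.

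The key claim is a matching lower bound: there exist $c_1, c_2 > 0$ such that, uniformly for $0 < d(x,y) < \delta$,
\[
G(x,y) \geq c_1\, d(x,y)^{2-n} - c_2 \quad (n>2), \qquad G(x,y) \geq c_1 \log d(x,y)^{-1} - c_2 \quad (n=2).
\]
Given this, the choice $C_1 = 1/c_1$ and $C_2 = c_2/c_1$, enlarged if necessary to cover the previous regime, proves the lemma.

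To establish the near-diagonal bound I would invoke the parametrix construction underlying Theorem \ref{thm:green_f} (see the proof of \cite[Thm 4.13]{aubin}). Pick a cutoff $\chi \in C^\infty_c([0, \inj(\M)/2))$ with $\chi \equiv 1$ on $[0,\delta]$, and define the symmetric model kernel
\[
\Gamma(x,y) = \chi(d(x,y))\, f_n(d(x,y)), \qquad f_n(r) = \frac{r^{2-n}}{(n-2)\vol(\S^{n-1})} \ (n>2), \quad f_2(r) = \frac{1}{2\pi}\log r^{-1}.
\]
A direct computation in normal coordinates around $x$, using the radial formula for the Laplacian and the smoothness of $r \mapsto \partial_r \log\sqrt{\det g_{ij}(r\theta)}$, gives $\Delta_y \Gamma(x,y) = \delta_x(y) + H(x,y)$ for a function $H$ bounded on $\M\times\M$. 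Consequently $u(x,y) := G(x,y) - \Gamma(x,y)$ satisfies the elliptic equation $\Delta_y u(x,y) = -V^{-1} - H(x,y)$ on $\M$ with a uniformly bounded right-hand side, and standard elliptic estimates on the compact manifold $\M$ yield a uniform $L^\infty$ bound on $u$, which is precisely the required asymptotic with $c_1$ equal to the coefficient of $f_n$.

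The main difficulty is the uniformity in $x$ of the $L^\infty$ bound on $G - \Gamma$: the bound on $H$ and the elliptic estimates must be made uniform as $x$ ranges over $\M$, which ultimately rests on the compactness of $\M$ and the smoothness of $g$ (so that normal charts and the metric coefficients are uniformly controlled). Once this uniformity is secured, combining with the trivial bound on $\{d(x,y) \geq \delta\}$ produces the desired constants $C_1, C_2$ and finishes the proof.
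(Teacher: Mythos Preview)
The paper does not give its own argument here; it simply invokes \cite[Proposition~6.1]{ponge}, which provides the two--sided asymptotic $G(x,y)\sim c_n\,d(x,y)^{2-n}$ (resp.\ $c_2\log d(x,y)^{-1}$) near the diagonal. Your strategy---splitting at a threshold $\delta$ and comparing $G$ with an explicit model singularity on $\{d<\delta\}$---is exactly how such a statement is proved, so in spirit you are reproducing what the cited reference does.

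There is, however, a concrete gap in the near--diagonal step. With the naive parametrix $\Gamma(x,y)=\chi(r)f_n(r)$ you do \emph{not} get a bounded remainder $H$. In normal coordinates, for $r<\delta$ one has
\[
\Delta_y\Gamma(x,y)\;=\;-f_n'(r)\,\partial_r\log\omega_x(y)
\;=\;\frac{1}{\vol(\S^{n-1})\,r^{\,n-1}}\,\partial_r\log\omega_x(y),
\]
and the Taylor expansion $\omega_x=1-\tfrac{1}{6}\mathrm{Ric}_{ij}u^iu^j+O(|u|^3)$ only gives $\partial_r\log\omega_x=O(r)$, so $H=O(r^{2-n})$. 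This is bounded only when $n=2$; for $n\geq 3$ it blows up, and for $n\geq 4$ it does not even lie in any $L^p$ with $p>n/2$, so the elliptic estimate you invoke cannot conclude that $u=G-\Gamma$ is in $L^\infty$.

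The fix is to use a sharper parametrix. Either include the volume--density correction (e.g.\ take $\Gamma(x,y)=\chi(r)\,\omega_x(y)^{-1}f_n(r)$, which kills the leading error term), or follow Aubin's iterated construction in the proof of \cite[Thm~4.13]{aubin}: one convolves the crude parametrix with itself enough times so that the final remainder is continuous. Either route yields $|G(x,y)-c_n d(x,y)^{2-n}|\leq C$ (resp.\ $|G-c_2\log d^{-1}|\leq C$) uniformly, which is precisely the content of the cited proposition and immediately gives your constants $C_1,C_2$.
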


\begin{proof} The result follows from \cite[Proposition 6.1]{ponge}.
\end{proof}

\begin{lemma}\label{lem:Gstar} Let $\nu$ be a signed finite Borel measure such that $\nu(\M) = 0$ and $|\mathcal{I}_G[\nu]| < \infty$, and let $\{\nu_\eps\}_\eps$ be the sequence of smooth functions in Lemma \ref{lem:aprox_nu}. Then
\[
\lim_{\eps \ra 0}\int_{x,y\in \M}G(x,y)\nu_\eps(x)\nu_\eps(y)\dvol(x)\dvol(y) = \int_{x,y\in\M}G(x,y)d\nu(x)d\nu(y).
\]
\end{lemma}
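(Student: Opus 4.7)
The plan is to use Fubini's theorem to rewrite the left-hand side as an integral against $\nu \otimes \nu$ of a mollified version of $G$, and then pass to the limit via the Dominated Convergence Theorem. Expanding $\nu_\eps$ via \eqref{eq:nu_eps} and formally swapping orders of integration, the goal is the identity
\[
\int_{x,y\in\M} G(x,y)\,\nu_\eps(x)\nu_\eps(y)\,\dvol(x)\dvol(y) = \int_{z,w\in\M} F_\eps(z,w)\,d\nu(z)d\nu(w),
\]
where $F_\eps(z,w) := \int G(x,y)\,H_\eps(x,z)\,H_\eps(y,w)\,\dvol(x)\dvol(y)$, after which one sends $\eps \to 0$.

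The main technical obstacle is producing a single $(|\nu|\otimes|\nu|)$-integrable dominator for $|F_\eps|$ independent of $\eps$, in the presence of the singularity of $G$ on the diagonal. To achieve this, I would combine the pointwise upper bound on $|G|$ from Theorem \ref{thm:green_f}(2) with Lemma \ref{lem:lemita3}, which yields (say for $n>2$; the case $n=2$ is analogous with the logarithm)
\[
|F_\eps(z,w)| \leq \frac{kK}{d(z,w)^{n-2}},
\]
and then apply Lemma \ref{lem:lemita4} to obtain $|F_\eps(z,w)| \leq A_1\,G(z,w) + A_2$ for constants $A_1, A_2 > 0$. By the hypothesis $|\mathcal{I}_G[\nu]|<\infty$ together with Remark \ref{rmk:nunu_int}, $G$ is $(|\nu|\otimes|\nu|)$-integrable, so this upper bound is likewise integrable. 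The same estimate, together with Tonelli's theorem, also justifies the Fubini swap used to establish the displayed identity above.

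For pointwise convergence, fix $z\neq w$ and take $\eps < d(z,w)/3$, so that $H_\eps(\cdot,z)$ and $H_\eps(\cdot,w)$ have disjoint supports in a region where $G$ is smooth. Writing $F_\eps(z,w)-G(z,w)$ as the integral of $G(x,y)-G(z,w)$ against the product probability measure $H_\eps(x,z)H_\eps(y,w)\dvol(x)\dvol(y)$ (using Proposition \ref{prop:mollifiers}(4)), uniform continuity of $G$ on a compact neighborhood of $(z,w)$ forces $F_\eps(z,w) \to G(z,w)$ as $\eps \to 0$. The diagonal $\{z=w\}$ has zero $|\nu|\otimes|\nu|$-measure, since $G$ is $(|\nu|\otimes|\nu|)$-integrable while blowing up on this set, so pointwise convergence holds $(\nu \otimes \nu)$-almost everywhere. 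Applying the Dominated Convergence Theorem with the dominator $A_1 G + A_2$ to the signed measure $\nu\otimes\nu$ then yields the desired equality.
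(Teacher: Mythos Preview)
Your proof is correct and follows essentially the same route as the paper: apply Fubini to rewrite the left-hand side as $\int F_\eps\,d(\nu\otimes\nu)$, establish pointwise convergence $F_\eps(z,w)\to G(z,w)$ off the diagonal, dominate $|F_\eps|$ by $A_1 G + A_2$ via Theorem~\ref{thm:green_f}(2), Lemma~\ref{lem:lemita3} and Lemma~\ref{lem:lemita4}, and invoke the Dominated Convergence Theorem. The only cosmetic difference is that the paper uses Tietze's extension theorem together with Proposition~\ref{prop:mollifiers}(6) for the pointwise limit where you argue via uniform continuity directly; your added remarks that the dominator also justifies the Fubini swap and that the diagonal is $(|\nu|\otimes|\nu|)$-null are welcome clarifications the paper leaves implicit.
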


\begin{proof} Applying Fubini's Theorem, we have that
\begin{align*}
\int_{x,y\in \M}&G(x,y)\nu_\eps(x)\nu_\eps(y)\dvol(x)\dvol(y)\\
&= \int_{z,w\in \M}\left(\int_{x,y\in \M}G(x,y)H_\eps(z,x)H_\eps(w,y)\dvol(x)\dvol(y)\right)d\nu(z)d\nu(w).
\end{align*} Let us denote
\[
f_\eps(z,w) = \int_{x,y\in \M}G(x,y)H_\eps(z,x)H_\eps(w,y)\dvol(x)\dvol(y),
\] and let us see that we may apply the Dominated Convergence Theorem.

First, if $z\neq w$, choosing $\eps > 0$ small enough the closed balls $\overline{B}(z,\eps)$ and $\overline{B}(w,\eps)$ do not intersect. The integrand in $f_\eps(z,w)$ has support on $F = \overline{B}(z,\eps)\times \overline{B}(w,\eps)$, and restricted to this closed set the function $G$ is continuous. By Tietze's Extension Theorem, we may choose a function $\tilde{G}$, continuous con $\M \times \M$, such that $\tilde{G}|_F = G$. By (6) in Proposition \ref{prop:mollifiers} and applying Fubini's theorem,
\[
\lim_{\eps\ra 0}f_\eps(z,w) = \lim_{\eps \ra 0}\int_{x,y\in \M}\tilde{G}(x,y)H_\eps(z,x)H_\eps(w,y)\dvol(x)\dvol(y) = \tilde{G}(z,w) = G(z,w).
\] This proves that $f_\eps(z,w) \ra G(z,w)$ pointwise almost everywhere on $\M \times \M$.

Suppose that $n > 2$ (the proof for the case $n = 2$ is similar). From  (2) in Theorem \ref{thm:green_f}, there exists a positive constant $k$ such that, for every $x,y\in \M$,
\[
|G(x,y)| \leq \frac{k}{d(x,y)^{n-2}}.
\] Hence,
\begin{align*}
|f_\eps(z,w)| &\leq \int_{x,y\in \M}\frac{kH_\eps(z,x)H_\eps(w,y)}{d(x,y)^{n-2}} \dvol(x)\dvol(y)\\
&\leq \frac{C}{d(z,w)^{n-2}} \quad \text{(by Lemma \ref{lem:lemita3})}\\
&\leq CC_1G(z,w)+CC_2, \quad \text{(by Lemma \ref{lem:lemita4})},
\end{align*} with all the constants independent from $\eps$. Since $G$ is $\nu\otimes\nu$--integrable (see Remark \ref{rmk:nunu_int}) and so are the constant functions, the result follows from the Dominated Convergence Theorem.
\end{proof}

\begin{proposition}\label{prop:g_posdef} The kernel $G$ is strictly conditionally positive definite.
\end{proposition}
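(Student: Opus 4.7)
The plan is to reduce the problem to the smooth case, where one can integrate by parts, and then pass to the limit using the mollifier machinery already developed. Let $\nu$ be a signed finite Borel measure on $\M$ with $\nu(\M)=0$ and $|\mathcal{I}_G[\nu]|<\infty$, and let $\{\nu_\eps\}_\eps$ be the sequence of smooth functions from Lemma \ref{lem:aprox_nu}, so that $\int_\M \nu_\eps\,\dvol=0$ and $\nu_\eps\dvol \stackrel{*}{\rightharpoonup} \nu$. By Lemma \ref{lem:Gstar},
\[
\mathcal{I}_G[\nu] = \lim_{\eps\to 0}\mathcal{I}_G[\nu_\eps\dvol],
\]
so it suffices to show the non-negative result for the smooth approximants and then separately handle the equality case.

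For the non-negativity, I would treat the smooth, zero-mean case directly. Given a smooth function $f$ on $\M$ with $\int f\,\dvol=0$, Remark \ref{rmk:poisson} produces $u(x)=\int_{y\in\M}G(x,y)f(y)\,\dvol(y)$ solving $\Delta u=f$, and the regularity estimates in Theorem \ref{thm:green_f} together with smoothness of $f$ imply $u\in C^\infty(\M)$. Using the convention $\Delta=-\div\nabla$ and the divergence theorem on the closed manifold $\M$,
\[
\mathcal{I}_G[f\,\dvol] = \int_{\M} f(x)u(x)\,\dvol(x) = \int_{\M} u\,\Delta u\,\dvol = \int_{\M}\|\nabla u\|^2\,\dvol \geq 0,
\]
since $\int_\M \div(u\nabla u)\,\dvol = 0$. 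Applying this to $f=\nu_\eps$ and sending $\eps\to 0$ gives $\mathcal{I}_G[\nu]\geq 0$.

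For the strict part, suppose $\mathcal{I}_G[\nu]=0$. Let $u_\eps\in C^\infty(\M)$ be the unique zero-mean solution of $\Delta u_\eps=\nu_\eps$, so that the computation above yields $\|\nabla u_\eps\|_{L^2(\M)}^2 = \mathcal{I}_G[\nu_\eps\dvol]\to 0$. By the Poincar\'e inequality on the closed Riemannian manifold $\M$ (valid for zero-mean functions), $\|u_\eps\|_{L^2(\M)}\to 0$. For any test function $\varphi\in C^\infty(\M)$, integration by parts and weak-$*$ convergence give
\[
\int_{\M}\varphi\,d\nu = \lim_{\eps\to 0}\int_{\M}\varphi\,\nu_\eps\,\dvol = \lim_{\eps\to 0}\int_{\M}\varphi\,\Delta u_\eps\,\dvol = \lim_{\eps\to 0}\int_{\M}u_\eps\,\Delta\varphi\,\dvol,
\]
and this last limit is bounded in absolute value by $\|u_\eps\|_{L^2}\|\Delta\varphi\|_{L^2}\to 0$. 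Hence $\int\varphi\,d\nu=0$ for every smooth $\varphi$, which forces $\nu\equiv 0$.

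The main obstacle is really the strict definiteness: merely having $\mathcal{I}_G[\nu_\eps\dvol]\to 0$ is weaker than $\nu_\eps\to 0$, and the Poincar\'e inequality plus an integration-by-parts pairing with smooth test functions is exactly the right bridge back to weak-$*$ convergence. All of the analytic subtleties concerning the passage from $\nu$ to $\nu_\eps\dvol$ (in particular the singularity of $G$ along the diagonal) have already been absorbed into Lemma \ref{lem:Gstar}, so no further estimates on $G$ are needed here.
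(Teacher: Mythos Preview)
Your proof is correct and follows the same overall strategy as the paper: mollify $\nu$ to $\nu_\eps$, show $\mathcal{I}_G[\nu_\eps\,\dvol]=\|\nabla u_\eps\|_{L^2}^2\geq 0$ via Green's first identity, and pass to the limit using Lemma~\ref{lem:Gstar}. The only real difference is in the strictness step. The paper stays at the gradient level: for a zero-mean $u\in C^2(\M)$ it writes $u(y)=\int G(x,y)\Delta u(x)\,\dvol(x)$, integrates against $\nu_\eps$, and obtains $\int u\,d\nu=\lim_{\eps\to 0}\int g(\nabla u,\nabla u_\eps)\,\dvol$, then applies Cauchy--Schwarz directly with $\|\nabla u_\eps\|_{L^2}\to 0$. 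You instead invoke the Poincar\'e inequality to upgrade $\|\nabla u_\eps\|_{L^2}\to 0$ to $\|u_\eps\|_{L^2}\to 0$, and then pair $u_\eps$ with $\Delta\varphi$. Both routes are fine; the paper's is slightly more self-contained (no appeal to Poincar\'e), while yours is a touch more streamlined once that inequality is granted.
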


\begin{proof} Let $\nu$ be a signed finite Borel measure on $\M$ such that $\nu(\M) = 0$ and $\mathcal{I}_K[\nu]$ is finite. Let $\{\nu_\eps\}_\eps$ with $0 < \eps < \frac{\inj(\M)}{4}$ be the sequence of smooth functions from Lemma \ref{lem:aprox_nu}. For each $\eps$, let $\tilde{\nu}_\eps$ be a zero mean function such that $\Delta\tilde{\nu}_\eps = \nu_\eps$ (which exists from Remark \ref{rmk:poisson}). Then, applying Fubini's Theorem and (1) from Theorem \ref{thm:green_f},
\begin{align*}
\int_{x,y\in \M}G(x,y)\nu_\eps(x)\nu_\eps(y)\dvol(x)\dvol(y) &= \int_{y\in \M}\left(\int_{x\in \M}G(x,y)\nu_\eps(x)\dvol(x)\right)\nu_\eps(y)\dvol(y)\\
&= \int_{y\in \M}\tilde{\nu}_{\eps}(y)\Delta\tilde{\nu}_\eps(y)\dvol(y)\\
&= \int_{y\in \M}\|\nabla\tilde{\nu}_\eps(y)\|^2\dvol(y) \geq 0,
\end{align*} where we have used Green's First Identity. From Lemma \ref{lem:Gstar},
\[
\int_{x,y\in\M}G(x,y)d\nu(x)d\nu(y) = \lim_{\eps \ra 0}\int_{x,y\in \M}G(x,y)\nu_\eps(x)\nu_\eps(y)\dvol(x)\dvol(y) \geq 0.
\]

Now assume that
\[
\int_{x,y\in \M}G(x,y)d\nu(x)d\nu(y) = 0.
\] Let $u\in C^2(\M)$ be any zero mean function. Then,
\begin{align*}
\left|\int_{y\in \M}u(y)d\nu(y)\right| &= \left|\lim_{\eps\ra 0}\int_{y\in\M}u(y)\nu_\eps(y)\dvol(y)\right|\\
&= \left|\lim_{\eps\ra 0}\int_{y\in \M}\left(\int_{x\in \M}G(x,y)\Delta u(x)\dvol(x)\right)\nu_{\eps}(y)\dvol(y)\right|\\
&= \left|\lim_{\eps\ra 0}\int_{x\in \M}\Delta u(x)\left(\int_{y\in \M}G(x,y)\nu_\eps(y)\dvol(y)\right)\dvol(x)\right|\\
&= \left|\lim_{\eps\ra 0}\int_{x\in \M}\Delta u(x)\tilde{\nu}_\eps(x)\dvol(x)\right|\\
&= \left|\lim_{\eps\ra 0}\int_{x\in \M}g(\nabla u(x),\nabla\tilde{\nu}_\eps(x))\dvol(x)\right|\\
&\leq \left(\int_{x\in \M}\|\nabla u(x)\|^2\dvol(x)\right)^{1/2}\lim_{\eps \ra 0}\left(\int_{x\in\M}\|\nabla\tilde{\nu}_\eps(x)\|^2\dvol(x)\right)^{1/2},
\end{align*} and
\[
\lim_{\eps \ra 0}\int_{x\in \M}\|\nabla\tilde{\nu}_\eps(x)\|^2\dvol(x) = \int_{x,y\in \M}G(x,y)d\nu(x)d\nu(y) = 0.
\] Hence, for every zero mean function $u\in C^2(\M)$,
\[
\int_{y\in \M}u(y)d\nu(y) = 0,
\]
and by adding a constant it is immediate to see that the zero mean hypotheses is unnecessary. Now, let $f:\M \ra \R$ be a continuous function. Since $\M$ is compact, by the Stone-Weierstrass Theorem there is a sequence $\{f_n\}_n$ of $C^2$ functions converging uniformly to $f$ on $\M$. Thus, for every $x\in \M$ and for all $n$ sufficiently large, $|f_n(x)-f(x)|\leq 1$, so
\[
|f_n(x)| = |f_n(x)-f(x)+f(x)| \leq |f_n(x)-f(x)|+|f(x)| \leq 1+C,
\] where $C$ is a global bound for $f$. Therefore, by the Dominated Convergence Theorem,
\[
\int_{y\in \M}f(y)d\nu(y) = \lim_n\int_{y\in \M}f_n(y)d\nu(y) = 0.
\] Hence, $\nu \equiv 0$ and the result follows.
\end{proof}

\subsection{Proof of Theorem \ref{th:main}} We have shown in Proposition \ref{prop:g_posdef} that the kernel $G$ is strictly conditionally positive definite. Moreover, $G$ is symmetric, lower semicontinuous and
\[
W_G(\M) = \inf_{\mu}\mathcal{I}_G[\mu] \leq V^{-2}\int_{x,y\in\M}G(x,y)\dvol(x)\dvol(y) = 0 < \infty.
\] Since for the measure $\lambda = V^{-1}\dvol$ the potential
\[
U_G^\lambda(x) = V^{-1}\int_{y\in\M}G(x,y)\dvol(y)
\] has a constant finite value (namely, $0$), by Theorem $\ref{th:book_1}$ the normalized Riemannian measure $\lambda$ is the unique equilibrium measure for $G$ and $W_G(\M) = 0$. By Theorem \ref{th:book_2}, any convergent subsequence of $\frac{1}{N}\sum_{x\in \omega^*_N}\delta_x$ converges to $\lambda$. Finally, the result follows from the Banach--Alaoglu Theorem.
\begin{flushright}\qedsymbol\end{flushright}

\bibliographystyle{amsplain}
\bibliography{references}

\end{document}